\numberwithin{equation}{section}
\begin{document}
\title{Sharp weighted Korn and Korn-like inequalities and an application to washers}
\author{Davit Harutyunyan\\
\textit{Department of Mathematics, University of Utah}}
\maketitle

\begin{abstract}
In this paper we prove asymptotically sharp weighted "first-and-a-half" $2D$ Korn and Korn-like inequalities with a
singular weight occurring from Cartesian to cylindrical change of variables. We prove some Hardy and the so-called
"harmonic function gradient separation" inequalities with the same singular weight. Then we apply
the obtained $2D$ inequalities to prove similar inequalities for washers with thickness $h$ subject to vanishing
Dirichlet boundary conditions on the inner and outer thin faces of the washer.
A washer can be regarded in two ways: As the limit case of a conical shell when the slope goes to zero, or as a very short hollow cylinder.
While the optimal Korn constant in the first Korn inequality for a conical shell with
thickness $h$ and with a positive slope scales like $h^{1.5}$ e.g. [\ref{bib:Gra.Har.4}],
the optimal Korn constant in the first Korn inequality for a washer
scales like $h^2$ and depends only on the outer radius of the washer
as we show in the present work. The Korn constant in the first and a half inequality scales like $h$
and depends only on $h.$ The optimal Korn constant is realized by a Kirchoff Ansatz.
This results can be applied to calculate the critical buckling load of a washer under
in plane loads, e.g. [\ref{bib:Ant.Ste.}]. \\
\newline
\textbf{Keywords}\ \ Korn inequality; elasticity; thin domains; shells; plates; washers
\newline
\linebreak
\textbf{Mathematics Subject Classification}\ \ 00A69, 74B05, 74B20, 74K20, 74K25
\end{abstract}

\section{Introduction}
\label{sec:1}

\newtheorem{Theorem}{Theorem}[section]
\newtheorem{Lemma}[Theorem]{Lemma}
\newtheorem{Corollary}[Theorem]{Corollary}
\newtheorem{Remark}[Theorem]{Remark}
\newtheorem{Definition}{Definition}[section]
Korn's inequalities have arisen in the investigation of the boundary value problem
of linear elastostatics, [\ref{bib:Korn.1},\ref{bib:Korn.2}] and have been proven by
different authors, e.g. [\ref{bib:Kon.Ole.1},\ref{bib:Kon.Ole.2},\ref{bib:Friedrichs},\ref{bib:Pay.Wei.},\ref{bib:Kohn}].
Some generalized versions of the classical second Korn inequality have been recently proven in [\ref{bib:Nef.Pau.Wit.1},\ref{bib:Nef.Pau.Wit.2},\ref{bib:Con.Dol.Mul.},\ref{bib:Bau.Nef.Pau.Sta.}].
Traditionally there are two types of Korn inequalities spoken about, the first and the second one.
The classical first Korn inequality is typically formulated as follows:
\textit{Assume $\Omega\in\mathbb R^n$ is a simply connected open domain with a Lipschitz boundary,
and $V\subset W^{1,2}(\Omega)$ is a closed subspace that contains no rigid motion other than the identically zero one. Then,
there exists a constant $C_1(\Omega, V)$ depending only on $\Omega$ and $V$ such that the inequality
$$
C_1\|\nabla \mathbf{U}\|^2\leq \|e(\mathbf{U})\|^2
$$
holds for all displacement fields $\mathbf{U}\in V$.} Accordingly, the quantity
\begin{equation}
\label{1.05}
K(\Omega,V)=\inf_{\mathbf{U}\in V}\frac{\|e(\mathbf{U})\|^2}{\|\nabla \mathbf{U}\|^2}
\end{equation}
will be called the Korn's constant of the domain $\Omega$ associated to the subspace $V.$
The classical second Korn inequality reads as follows: \textit{Assume $\Omega\in\mathbb R^n$ is a simply connected open domain with a
Lipschitz boundary, then there exists a constant $C_2(\Omega)$ depending only on $\Omega$ such that the inequality
\begin{equation}
C_2\|\nabla \mathbf{U}\|^2\leq \|e(\mathbf{U})\|^2+\|\mathbf{U}\|^2
\end{equation}
holds for all displacement fields $\mathbf{U}\in\mathbb W^{1,2}(\Omega, \mathbb R^n),$ where
$e(\mathbf{U})=\frac{1}{2}(\nabla \mathbf{U}+\nabla \mathbf{U}^T)$
is the symmetrized gradient, i.e., strain in the linear elasticity context.}
We refer to the review article [\ref{bib:Horgan}] for a detailed discussion of Korn inequalities
and their role in the theory of linear and non-linear elasticity.
A more global variant of the first Korn inequality has been proven by Kohn in [\ref{bib:Kohn}].
Korn inequalities and other related inequalities for integrals of quadratic functionals also
arise in the analysis of viscous incompressible fluid flow, see the references in [\ref{bib:Horgan}], while Korn-like inequalities with
tangential boundary conditions arise in statistical mechanics, [\ref{bib:Des.Vil.},\ref{bib:Lew.Mul.1}].
In many applications it is essential to know the dependence of the optimal Korn
constants $C_1$ and $C_2$ upon the geometric parameters of the application domain $\Omega$ and the subspace $V.$
We will call such kind of Korn inequalities with optimal constants\footnote{In general it is not known whether a best constant in a
first or second Korn inequality exists. Here we speak about asymptotic optimality.}
sharp Korn inequalities. Sharp Korn inequalities have been derived by several authors e.g.,
[\ref{bib:Dau.Sur.},\ref{bib:Koh.Vog.},\ref{bib:Gra.Tru.},\ref{bib:Lew.Mul.1},\ref{bib:Lew.Mul.2},\ref{bib:Par.Tom.1},\ref{bib:Par.Tom.2},\ref{bib:Ryzhak},\ref{bib:Naz.Slu.},\ref{bib:Naz.1},\ref{bib:Naz.2},\ref{bib:Naz.3},\ref{bib:Par.Tom.1},\ref{bib:Par.Tom.2},\ref{bib:Harutyunyan},\ref{bib:Gra.Har.1},\ref{bib:Gra.Har.2},\ref{bib:Gra.Har.3}].
One of the applications of such inequalities is the study of buckling of slender structures
[\ref{bib:Dau.Sur.},\ref{bib:Horgan},\ref{bib:Gra.Tru.},\ref{bib:Par.Tom.2},\ref{bib:Gra.Har.2},\ref{bib:Gra.Har.3}],
where Korn's constants govern the scaling of the critical load as a function of the slenderness parameter as understand by
Grabovsky and Truskinovsky in [\ref{bib:Gra.Tru.}]. In [\ref{bib:Gra.Tru.}] the theory of buckling of slender structures of
Grabovsky and Truskinovsky deals with the buckling of a slender body $\Omega_h$ (parameterized by the small parameter
$h,$ usually the thickness of the body $\Omega_h$) under dead loads $\mathbf{t}(x,h,\lambda)$ with magnitude $\lambda$ applied to the boundary of $\Omega_h.$
The buckling is then roughly speaking defined as the first loss of stability
of the trivial branch (resulting Lipshitz deformation) $\mathbf{y}(x,h,\lambda),$ i.e., when the
second variation of the total energy
$$E(\mathbf{y})=\int_{\Omega_h}W(\nabla \mathbf{y}(x))dx-\int_{\partial\Omega_h}\mathbf{y}(s)\cdot \mathbf{t}(s)ds$$
becomes negative at some variations $\phi.$ The admissible set of variations $\phi$ is determined by the dead loads $\mathbf{t}(x,h,\lambda),$
namely the load $\mathbf{t}(x,h,\lambda)$ gives rise to Dirichlet type vanishing boundary conditions for the variations $\mathbf{\phi}$ on the boundary $\partial\Omega_h.$
Thus $\mathbf{\phi}\in V_h=V(\Omega_h),$ where $V_h\in W^{1,\infty}(\Omega_h),$ for some subspace $V_h.$
Grabovsky and Harutyunyan give an alternative explicit formula for the critical buckling load
of a slender body under dead loads in [\ref{bib:Gra.Har.3}], utilizing which they study the buckling of
cylindrical shells under axial compression in [\ref{bib:Gra.Har.2}] and under axial compression with
some additional torque\footnote{Note, that in general the load need not be hydrostatic.} in [\ref{bib:Gra.Har.3}].
For the sake of clearness, we recall the formula mentioned above. Denote
\begin{equation}
\label{1.1}
R(h,\mathbf{\phi})=\frac{\int_{\Omega_{h}}(L_{0}e(\mathbf{\phi}),e(\mathbf{\phi}))dx}
{\int_{\Omega_{h}}(\sigma_{h},\nabla\mathbf{\phi}^{T}\nabla\mathbf{\phi})dx},
\end{equation}
where $L_0=W_{FF}(I)$ is the linear elasticity matrix, $\sigma_h$ is the Piola-Kirchoff stress tensor and
$e(\phi)=\frac{1}{2}\left(\nabla\mathbf{\phi}+\nabla\mathbf{\phi}^T\right)$ is the linear elastic strain.
Define furthermore
\begin{equation}
\label{1.2}
\lambda(h)=\inf_{\mathbf{\phi}\in V_h}R(h,\mathbf{\phi}).
\end{equation}
Then the following theorem holds:
\begin{Theorem}[The critical load]
Assume that the quantity $\lambda(h)$ defined in (\ref{1.2})  satisfies
$\lambda(h)>0$ for all sufficiently small $h$ and
\begin{equation}
 \label{1.3}
\lim_{h\to 0}\frac{{\lambda}(h)^{2}}{K(\Omega_h,V_{h})}=0,
\end{equation}
where $K(\Omega_h,V_h)$ is the Korn's constant of the domain $\Omega_h$ associated to the subspace $V_h.$
Then $\lambda(h)$ is the critical buckling load and the variational problem (\ref{1.2})
captures the buckling modes (deformations) too (see [\ref{bib:Gra.Har.3}] for a precise definition of buckling modes).
\end{Theorem}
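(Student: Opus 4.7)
The plan is to analyze the sign of the second variation of the total energy $E(\mathbf{y})$ along the trivial branch $\mathbf{y}(x,h,\lambda)$ and to identify the smallest load at which it first turns negative. First I would Taylor-expand the stored energy $W(\nabla\mathbf{y}+\nabla\phi)$ around the trivial branch. Using $W_F(I)=0$, $W_{FF}(I)=L_0$, and the fact that the trivial branch satisfies the equilibrium equations with Piola--Kirchoff stress $\sigma_h$, the dead-load boundary integral exactly cancels the linear contribution in $\phi$, and one obtains
\begin{equation}
\label{eq:Eexp}
E(\mathbf{y}+\phi)-E(\mathbf{y})=\tfrac12\int_{\Omega_h}(L_0 e(\phi),e(\phi))\,dx-\tfrac12\int_{\Omega_h}(\sigma_h,\nabla\phi^T\nabla\phi)\,dx+\mathcal{R}(\phi),
\end{equation}
valid for $\phi\in V_h$, where $\mathcal{R}(\phi)$ collects all contributions of order $\|\nabla\phi\|^3$ and higher, together with small corrections proportional to $\|\nabla\mathbf{y}-I\|$ on the trivial branch.

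Second, I would perform the purely linear stability analysis on the quadratic part of (\ref{eq:Eexp}), which is exactly the combination whose Rayleigh quotient is $R(h,\phi)$ in (\ref{1.1}). For $\lambda<\lambda(h)$ the quadratic form is coercive and bounded below by $\left(1-\lambda/\lambda(h)\right)\int_{\Omega_h}(L_0 e(\phi),e(\phi))\,dx$, while for $\lambda\geq\lambda(h)$ a minimizing sequence of $R(h,\phi)$ provides a direction along which it is non-positive. This identifies $\lambda(h)$ as the critical load of the linearized problem.

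Third, and this is the crux of the argument, I would estimate the nonlinear remainder against the quadratic part at the critical scale. A typical cubic term is bounded, after an interpolation between $L^2$ and a higher $L^p$ norm and the use of the growth assumptions on $W$, by $C\|\nabla\phi\|_{L^2}^3$ or equivalents; applying the Korn inequality $\|\nabla\phi\|_{L^2}^2\leq K(\Omega_h,V_h)^{-1}\|e(\phi)\|_{L^2}^2$ to one of the three factors converts this into a bound of the form $|\mathcal{R}(\phi)|\leq C\,K(\Omega_h,V_h)^{-1/2}\|e(\phi)\|_{L^2}^2\|\nabla\phi\|_{L^2}$. Along a sequence approaching the infimum in (\ref{1.2}) one has $\|e(\phi)\|_{L^2}^2\sim\lambda(h)\|\nabla\phi\|_{L^2}^2$, so after suitable normalization the ratio of the cubic remainder to the leading quadratic part scales as $\lambda(h)/\sqrt{K(\Omega_h,V_h)}$, whose square is precisely the quantity in (\ref{1.3}). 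The hypothesis (\ref{1.3}) therefore forces the nonlinear correction to become negligible as $h\to 0$, and the first loss of stability of the trivial branch occurs at $\lambda(h)(1+o(1))$, i.e.\ $\lambda(h)$ is the asymptotic critical load.

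The main obstacle is not the identification of the critical load but the extraction of actual buckling modes: one must exhibit a normalized sequence $\phi_n\in V_{h_n}$ with $R(h_n,\phi_n)\to\lambda(h_n)$ whose appropriate blow-up converges to a non-trivial macroscopic deformation, rather than dissipating into rigid motions or high-frequency infinitesimal oscillations. Securing this compactness in a norm in which both Korn's inequality is sharp and the quadratic form in (\ref{eq:Eexp}) is weakly lower semicontinuous is the technical heart of \cite{bib:Gra.Har.3}; it is exactly this requirement that motivates the sharp weighted Korn and Korn-like inequalities developed in the body of the present paper.
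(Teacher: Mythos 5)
First, a point of orientation: this theorem is not proved in the present paper at all. It is recalled from Grabovsky and Harutyunyan [\ref{bib:Gra.Har.3}] (building on Grabovsky and Truskinovsky [\ref{bib:Gra.Tru.}]) purely as motivation for computing the Korn constant $K(\Omega_h,V_h)$, so there is no in-paper proof to compare yours against. Judged on its own terms, your sketch correctly reconstructs the overall strategy of those references --- expand the energy about the trivial branch, isolate the constitutively linearized second variation whose Rayleigh quotient is $R(h,\phi)$, and use Korn's inequality to show that the corrections are negligible under (\ref{1.3}) --- but it is a program rather than a proof, and two of its steps contain genuine gaps.

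The first gap is that the statement cannot be proved without the precise definitions of ``critical load'' and ``buckling mode,'' which you never supply; in [\ref{bib:Gra.Tru.},\ref{bib:Gra.Har.3}] the critical load is itself defined asymptotically through the first load at which the \emph{true} second variation $\delta^2E(\mathbf{y}(\lambda))[\phi]$ admits a negative direction, and the entire content of the theorem is that this load is asymptotic to $\lambda(h)$. The second gap is conceptual: you attribute condition (\ref{1.3}) to the cubic remainder $\mathcal{R}(\phi)$, but the sign of a second variation is a statement about a quadratic form, which is homogeneous of degree two in $\phi$; cubic and higher terms can always be made negligible by rescaling $\phi\to t\phi$ with $t\to0$ and therefore cannot be the source of (\ref{1.3}). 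The error that actually matters is the discrepancy between the true Hessian $W_{FF}(\nabla\mathbf{y}(\lambda))$ evaluated on the trivial branch and the constitutively linearized pair $(L_0,\sigma_h)$: expanding $\nabla\mathbf{y}(\lambda)=I+O(\lambda)$ produces, besides the destabilizing term, cross terms of order $\lambda\|e(\phi)\|\,\|\nabla\phi\|\leq \lambda K(\Omega_h,V_h)^{-1/2}\|e(\phi)\|^2$, and these are negligible relative to $\|e(\phi)\|^2$ precisely when $\lambda(h)^2/K(\Omega_h,V_h)\to0$. That is where (\ref{1.3}) enters, and your sketch misses it. Finally, you yourself flag the extraction of buckling modes as ``the technical heart of [\ref{bib:Gra.Har.3}]'' and defer it there, so that part of the conclusion is assumed rather than proved. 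As a blind attempt this is a faithful road map of the known argument, but each load-bearing step would need the quantitative estimates of [\ref{bib:Gra.Tru.},\ref{bib:Gra.Har.3}] to be carried out.
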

Note, that $\lambda(h)$ is the infimum of a quadratic form in $e(\mathbf{\phi})$ over a quadratic form in $\nabla\mathbf{\phi},$ and thus
is closely related to the Korn's constant $K(\Omega_h,V_h).$ It will have the same asymptotics as $K(\Omega_h,V_h)$ if
for instance the stress tensor $\sigma_h$ is a full rank $3\times3$ matrix, but could decay slower than
$K(\Omega_h,V_h)$ otherwise as $h\to 0$ as observed in [\ref{bib:Gra.Har.2}]. Therefore, in order to be able to calculate
$\lambda(h)$ one should be able to calculate the Korn's constant $K(\Omega_h,V_h).$
With this applications in mind we study the scaling of the Korn's constant as a power of the thickness $h$ of a washer
under Dirichlet type boundary conditions.
In [\ref{bib:Dafermos}], the Dafermos proved second Korn inequalities for rings under some kind of normalization condition,
which is the average of some fiend over the application domain vanishes, i.e, a global condition.
In this work we prove weighted first and first and a half Korn and Korn-type inequalities with optimal constants
for washers. We First prove two dimensional weighted first and a half Korn and Korn-type
inequalities with exact constants subject to Dirichlet boundary conditions and then apply them
on the cross sections of the washer to derive the targeted three dimensional inequalities.
The new weighted inequalities seem to be the appropriate tool for treating
inequalities in cylindrical coordinates due to the presence of the Jacobian $\rho.$
It is shown in the present work for washers with thickness $h,$
that the optimal Korn's constant in the first and a half Korn inequality scales like $h$ and does not depend
on the either of inner and outer radii of the washer. In the first Korn
inequality, the optimal constant scales like $h^2$ and depends only on $h$ and the outer radius $R$ of the washer.
The present results can be used to calculate the critical buckling load of washers under
in-plane pressure applied to the thin part of the boundary as done for cylindrical shells under axial compression in
[\ref{bib:Gra.Har.2},\ref{bib:Gra.Har.3}]. See also a similar work of Antman and Stepanov [\ref{bib:Ant.Ste.}].

\section{Main Results}
\label{sec:2}
In this section we formulate the main results of the paper, while the proofs will be presented in Section~\ref{sec:4} and Section~\ref{sec:5}.
Denote the displacement vector by $\mathbf{u}=(u_\rho, u_\theta, u_z)$ in cylindrical coordinates.
Then the Cartesian gradient of $\mathbf{u}$ has the form
\begin{equation}
\label{2.1}
\nabla \mathbf{u}=
\begin{bmatrix}
u_{\rho,\rho} & \frac{u_{\rho,\theta}-u_\theta}{\rho} & u_{\rho,z}\\
u_{\theta,\rho} & \frac{u_{\theta,\theta}+u_\rho}{\rho} & u_{\theta,z}\\
u_{z,\rho} & \frac{u_{z,\theta}}{\rho} & u_{z,z},\\
\end{bmatrix},
\end{equation}
where $f_{,x}$ means the partial derivative of $f$ with respect to the variable $x.$ We will use different
notations for partial derivatives in the sequel interchangeably.
It is clear, that a washer with thickness $h,$ and inner and outer radii $r$ and $R$ is given
in cylindrical coordinates by $\Omega=\{(\rho,\theta,z) \ : \ \rho\in[r,R], \theta\in[0,2\pi], z\in[0,h]\}.$
Consider the subspaces
\begin{align}
\label{2.1.5}
V_1=\{\mathbf{u}\in W^{1,2}(\Omega) \ : \ u_\theta(r,\theta,z)=u_\theta(R,\theta,z)=u_\rho(r,\theta,z)=u_\rho(R,\theta,z)=0\},\\ \nonumber
V_2=\{\mathbf{u}\in W^{1,2}(\Omega) \ : \ u_\theta(r,\theta,z)=u_\theta(R,\theta,z)=u_z(r,\theta,z)=u_z(R,\theta,z)=0\},
\end{align}
i.e., we impose zero boundary conditions on the angular and vertical or radial components
of the displacement $\mathbf{u}$ (depending on the character of the load the boundary conditions may change)
on the inner and outer thin faces of the boundary of the washer.
As the Jacobin of a cylindrical change of variables is $\rho,$ then the Cartesian norm of a vector $\mathbf{F}$ will be
$\|\sqrt{\rho}\mathbf{F}\|_{L^2(\Omega)}$ in cylindrical coordinates. In what follows we will work in cylindrical coordinates
for all $3D$ Korn inequalities, and thus will deal with norms of the form $\|\sqrt{\rho}\mathbf{F}\|_{L^2(\Omega)}.$ We will
also sometimes leave out the $L^2(\Omega)$ in the norm notation when it is clear which domain is under consideration.
The following weighted inequality on the gradient of a harmonic function in rectangles is crucial in proving
appropriate weighted Korn inequalities in two dimensional rectangles. We call this kind of inequalities
\textit{harmonic function gradient separation estimates.}
\begin{Theorem}[Harmonic function gradient separation inequality]
\label{th:2.0}
Assume $L>l>0,$ $h>0$ and $T=(0,h)\times(l,L).$ Assume furthermore, that $f(x,y)\in C^2(\overline{T})$ is harmonic in $T$
and satisfies the boundary conditions $f(x,l)=f(x,L)=0$ for all $x\in[0,h].$
Then there holds
\begin{equation}
\label{2.1.6}
\|\sqrt{y}f_{,y}\|_{L^2(T)}^2\leq 40\left(\frac{\|\sqrt{y}f_{,x}\|_{L^2(T)}\cdot\|\sqrt{y}f\|_{L^2(T)}}{h}+\|\sqrt{y}f_{,x}\|_{L^2(T)}^2\right).
\end{equation}
\end{Theorem}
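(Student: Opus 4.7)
My strategy is to first use integration by parts and harmonicity to reduce the estimate to a boundary integral, then control that integral via a weighted trace inequality and an interior cutoff estimate.

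\emph{Key identity.} For fixed $x\in[0,h]$, integrate $\int_l^L y f_{,y}^2\,dy$ by parts in $y$: the boundary terms drop because $f(x,l)=f(x,L)=0$, the residual $\int_l^L f f_{,y}\,dy = \tfrac12[f^2]_l^L = 0$, and harmonicity converts the surviving integral to $\int_l^L y f f_{,xx}\,dy$. Integrating over $x$ and then once more by parts in $x$ produces
\[
\|\sqrt{y}\,f_{,y}\|^2 + \|\sqrt{y}\,f_{,x}\|^2 \;=\; \int_l^L y f(h,y) f_{,x}(h,y)\,dy - \int_l^L y f(0,y) f_{,x}(0,y)\,dy \;=:\; A_h - A_0.
\]

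\emph{Trace and cutoff inequalities.} Two auxiliary bounds are needed. (i) A weighted trace inequality, obtained by averaging the identity $u(h,y)^2 = u(x_0,y)^2 + 2\int_{x_0}^h u u_{,x}\,dx$ over $x_0\in(0,h)$, then multiplying by $y$ and integrating:
\[
\int_l^L y\,u(h,y)^2\,dy \;\le\; \tfrac{1}{h}\|\sqrt{y}\,u\|^2 + 2\|\sqrt{y}\,u\|\|\sqrt{y}\,u_{,x}\|,
\]
with an analog at $x=0$. (ii) An interior cutoff estimate: testing $\Delta f=0$ against $y\eta(x)^2 f$ for $\eta\in C^1([0,h])$ with $\eta(0)=\eta(h)=0$, $\eta\equiv 1$ on $[h/4,3h/4]$, and $|\eta'|\le 4/h$, and repeating the same IBP gives
\[
\|\eta\sqrt{y}\,f_{,y}\|^2 + \|\eta\sqrt{y}\,f_{,x}\|^2 \;=\; -2\int\!\!\int y\eta\eta'\,f f_{,x}\,dA \;\le\; \tfrac{8}{h}\|\sqrt{y}\,f\|\|\sqrt{y}\,f_{,x}\|,
\]
which directly bounds $\int_{h/4}^{3h/4}\int_l^L y f_{,y}^2\,dy\,dx$.

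\emph{Assembly and the main obstacle.} Applying Cauchy--Schwarz in $y$ to $A_h$ and $A_0$ reduces the task to bounding the traces $(\int_l^L y f(x_0,\cdot)^2)^{1/2}$ and $(\int_l^L y f_{,x}(x_0,\cdot)^2)^{1/2}$ at $x_0=0,h$; the $f$-traces are handled cleanly by (i). The \textbf{main obstacle} is the $f_{,x}$-trace: applying (i) to $u=f_{,x}$ pulls in $\|\sqrt{y}\,f_{,xx}\|$, and by harmonicity $\|\sqrt{y}\,f_{,xx}\|=\|\sqrt{y}\,f_{,yy}\|$ -- a second-derivative norm no better controlled than $\|\sqrt{y}\,f_{,y}\|$ itself -- so the naive trace iteration is circular. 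I would break the regress by using (ii) together with the mean value theorem to extract an interior slice $x^*\in(h/4,3h/4)$ with $\int_l^L y f_{,y}(x^*,y)^2\,dy\le (32/h^2)\|\sqrt{y}\,f\|\|\sqrt{y}\,f_{,x}\|$, and then propagating the estimate outward to $x=0$ and $x=h$ by the fundamental theorem of calculus in $x$, handling the emerging mixed derivative $f_{,xy}$ by applying the cutoff identity (ii) once more to the harmonic function $f_{,x}$ (which inherits the Dirichlet condition $f_{,x}(x,l)=f_{,x}(x,L)=0$). A final application of Young's inequality $2ab\le\varepsilon a^2+\varepsilon^{-1}b^2$ separates the cross term $\|\sqrt{y}\,f\|\|\sqrt{y}\,f_{,x}\|/h$ from $\|\sqrt{y}\,f_{,x}\|^2$ and absorbs any residual $\|\sqrt{y}\,f_{,y}\|^2$ on the right into the left, producing the stated constant $40$.
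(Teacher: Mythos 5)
Your key identity, the interior cutoff estimate (ii), and the idea of exploiting that $f_{,x}$ is harmonic and inherits the Dirichlet condition $f_{,x}(x,l)=f_{,x}(x,L)=0$ are all sound and closely parallel the paper's machinery: your (ii) is in substance the paper's estimate (\ref{3.12}) on the middle half-strip (obtained there by integrating the parametrized identity (\ref{3.10}) over nested rectangles $T_t$ and using monotonicity in $t$ rather than a cutoff), and your second application of (ii) to $f_{,x}$ amounts to the paper's Lemma~\ref{lem:3.3}, which yields $\int_T y\,\delta^2|\nabla f_{,x}|^2\le 4\int_T y f_{,x}^2$ with $\delta(x)=\min(x,h-x)$. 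The genuine gap is in the propagation step. What the Caccioppoli identity gives you is control of the mixed derivative only with a weight that degenerates quadratically at the lateral boundary: $\int_T y\,\eta^2 f_{,xy}^2\le Ch^{-2}\|\sqrt{y}f_{,x}\|^2$ with $\eta(x)\sim 4x/h$ near $x=0$. Propagating the slice bound from $x^*\in(h/4,3h/4)$ out to the quarter $(0,h/4)$ ``by the fundamental theorem of calculus'' then requires estimating $\int_x^{x^*}\bigl(\int_l^L y f_{,xy}(t,y)^2dy\bigr)^{1/2}dt$, and Cauchy--Schwarz against the available weight produces the factor $\bigl(\int_x^{x^*}\eta(t)^{-2}dt\bigr)^{1/2}\sim (h^2/x)^{1/2}$, whose square is not integrable over $x\in(0,h/4)$. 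So FTC plus Cauchy--Schwarz does not close. What is needed is a one-dimensional Hardy-type inequality, $\int_0^{h/4}u^2dx\le 4\int_{h/4}^{h/2}u^2dx+4\int_0^{h/2}x^2u'^2dx$, applied to $u=\sqrt{y}f_{,y}(\cdot,y)$ for each fixed $y$; this is exactly the paper's Lemma~\ref{lem:3.4}, and it is a strictly stronger tool than pointwise propagation (its proof is an integration by parts against the weight $(b-t)^2$, not FTC plus Cauchy--Schwarz).

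There is also a structural muddle in your assembly. The detour through the boundary terms $A_h-A_0$ of the full-rectangle identity forces you to control the traces $\int_l^L y\,f_{,x}(x_0,y)^2dy$ at $x_0=0,h$; your own circularity observation shows these are not controllable by the right-hand side, and the paper never attempts to estimate them. Moreover, the fix you propose --- an interior slice bound for $f_{,y}$ plus outward propagation --- does not bound the $f_{,x}$-trace at all; what it bounds (once the Hardy lemma replaces FTC) is $\|\sqrt{y}f_{,y}\|^2$ over the outer quarters, which is precisely what the theorem asks for. So the correct assembly is to abandon the trace identity and estimate $\|\sqrt{y}f_{,y}\|^2$ directly as (middle half via (ii)) plus (outer quarters via Hardy combined with the $\delta$-weighted Caccioppoli bound for $f_{,xy}$). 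With that reorganization and the Hardy inequality supplying the propagation, your argument coincides with the paper's proof.
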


The next one is the appropriate weighted first and a half Korn inequality on rectangles.
\begin{Theorem}[Weighted first and a half Korn inequality for rectangles]
\label{th:2.0.5}
Let $L>l>0$ and $h>0$ satisfy $h\leq cl$ for some $c>0$ and denote $T=(0,h)\times(l,L).$
Then there exists a constant $C=C(c,L)>0$ depending only on $c$ and $L$ such, that for any displacement
$\mathbf{U}=(f,g)\in W^{1,2}(\overline{T},\mathbb R^2)$ satisfying the boundary conditions $f(x,l)=f(x,L)=0,$ for all $x\in[0,h]$
in the sense of traces, the weighted first and a half Korn inequality holds:
\begin{equation}
\label{3.16}
\|\sqrt{y}\nabla \mathbf{U}\|_{L^2(T)}^2\leq
C\left(\frac{\|\sqrt{y}f\|_{L^2(T)}^2\cdot\|\sqrt{y}e(\mathbf{U})\|_{L^2(T)}^2}{h}+\|\sqrt{y}e(\mathbf{U})\|_{L^2(T)}^2\right).
\end{equation}
\end{Theorem}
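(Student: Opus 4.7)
The plan is to reduce the bound on $\|\sqrt{y}\nabla\mathbf{U}\|_{L^2(T)}^2$ to control of $\|\sqrt{y}f_{,y}\|_{L^2(T)}^2$: the diagonal entries $f_{,x}=e_{11}$ and $g_{,y}=e_{22}$ are already bounded by $\|\sqrt{y}e(\mathbf{U})\|$, while the relation $g_{,x}=2e_{12}-f_{,y}$ reduces the remaining off-diagonal entry to $f_{,y}$ as well. In order to exploit Theorem~\ref{th:2.0}, I would perform the harmonic decomposition $f=f_h+f_r$, where $f_h$ is harmonic in $T$ with boundary trace $f|_{\partial T}$, and $f_r:=f-f_h\in W^{1,2}_0(T)$. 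Because $f$ vanishes on $y=l$ and $y=L$, so does $f_h$, and Theorem~\ref{th:2.0} applies to $f_h$ directly.

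The first main step is to estimate $f_r$. Differentiating $f_{,y}=2e_{12}-g_{,x}$ and using $(g_{,x})_{,y}=(e_{22})_{,x}$ yields, in the distributional sense,
\begin{equation*}
\Delta f_r=\Delta f=(e_{11})_{,x}+2(e_{12})_{,y}-(e_{22})_{,x}.
\end{equation*}
Testing against $yf_r$ and integrating by parts (all boundary terms drop because $f_r\equiv 0$ on $\partial T$, and the extra unweighted contribution $\int_T f_rf_{r,y}$ produced by the weight $y$ is a total $y$-derivative whose boundary values also vanish), one obtains the identity
\begin{equation*}
\|\sqrt{y}\nabla f_r\|^2=\int_T yf_{r,x}(e_{11}-e_{22})+2\int_T y f_{r,y}e_{12}+2\int_T f_r e_{12}.
\end{equation*}
The three weighted terms are handled by Cauchy--Schwarz. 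The unweighted term $\int_T f_r e_{12}$ is the delicate one: Poincar\'e in the thin $x$-direction gives $\|f_r\|_{L^2}\le h\|f_{r,x}\|_{L^2}\le (h/\sqrt{l})\|\sqrt{y}\nabla f_r\|$, and combined with $\|e_{12}\|_{L^2}\le (1/\sqrt{l})\|\sqrt{y}e(\mathbf{U})\|$ together with the hypothesis $h\le cl$, it is bounded by $c\|\sqrt{y}\nabla f_r\|\,\|\sqrt{y}e(\mathbf{U})\|$. Dividing through gives $\|\sqrt{y}\nabla f_r\|\le C(c)\|\sqrt{y}e(\mathbf{U})\|$, and a further application of Poincar\'e in $x$, now keeping the weight inside, upgrades this to the crucial bound
\begin{equation*}
\|\sqrt{y}f_r\|_{L^2(T)}\le h\,\|\sqrt{y}f_{r,x}\|_{L^2(T)}\le C(c)\,h\,\|\sqrt{y}e(\mathbf{U})\|_{L^2(T)}.
\end{equation*}

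With these in hand, from $(f_h)_{,x}=e_{11}-(f_r)_{,x}$ and $f_h=f-f_r$ I read off
\begin{equation*}
\|\sqrt{y}(f_h)_{,x}\|\le C(c)\|\sqrt{y}e(\mathbf{U})\|,\qquad \|\sqrt{y}f_h\|\le\|\sqrt{y}f\|+C(c)\,h\,\|\sqrt{y}e(\mathbf{U})\|.
\end{equation*}
Substituting these into Theorem~\ref{th:2.0}, the cross contribution $C\|\sqrt{y}e(\mathbf{U})\|\cdot Ch\|\sqrt{y}e(\mathbf{U})\|/h=C^2\|\sqrt{y}e(\mathbf{U})\|^2$ is absorbed into the additive $\|\sqrt{y}e(\mathbf{U})\|^2$ summand, producing
\begin{equation*}
\|\sqrt{y}(f_h)_{,y}\|^2\le C\Bigl(\tfrac{\|\sqrt{y}f\|\,\|\sqrt{y}e(\mathbf{U})\|}{h}+\|\sqrt{y}e(\mathbf{U})\|^2\Bigr).
\end{equation*}
Combining with $\|\sqrt{y}(f_r)_{,y}\|\le C\|\sqrt{y}e(\mathbf{U})\|$, then invoking $g_{,x}=2e_{12}-f_{,y}$ and the trivial diagonal bounds, delivers the claimed inequality.

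The principal obstacle is precisely the unweighted cross term $\int_T f_r e_{12}$ in the energy identity, whose control requires both the thin-direction Poincar\'e inequality and the hypothesis $h\le cl$; this is the step through which the constant acquires its dependence on $c$. A closely related pitfall is keeping track of the explicit $h$ in $\|\sqrt{y}f_r\|\le C h\|\sqrt{y}e(\mathbf{U})\|$, since that $h$ is precisely what cancels the $1/h$ appearing in Theorem~\ref{th:2.0} and prevents a catastrophic $\|\sqrt{y}e(\mathbf{U})\|^2/h$ term on the right-hand side.
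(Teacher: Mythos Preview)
Your proposal is correct and follows essentially the same approach as the paper: harmonic decomposition of $f$, an integration-by-parts energy identity for the remainder $f_r$ yielding $\|\sqrt{y}\nabla f_r\|\le C\|\sqrt{y}e(\mathbf{U})\|$ and $\|\sqrt{y}f_r\|\le Ch\|\sqrt{y}e(\mathbf{U})\|$ (with the hypothesis $h\le cl$ used to absorb the unweighted cross term), followed by Theorem~\ref{th:2.0} applied to the harmonic part and a final triangle-inequality argument for $g_{,x}$. Note that the squares on $\|\sqrt{y}f\|$ and $\|\sqrt{y}e(\mathbf{U})\|$ in the displayed inequality (\ref{3.16}) are a typo in the statement; both your argument and the paper's proof actually establish the inequality with $\|\sqrt{y}f\|\cdot\|\sqrt{y}e(\mathbf{U})\|/h$ unsquared, which is also the form used downstream.
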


\begin{Theorem}[First and a half Korn inequality]
\label{th:2.1}
Assume $h\leq cr$ for some $c>0.$ Then there exists a constant $C=C(c,R)>0$ depending only on $c$ and $R$ such that
for any displacement $\mathbf{u}\in V_1\cup V_2$ the inequality holds:
\begin{equation}
\label{2.2}
\|\sqrt{\rho}\nabla \mathbf{u}\|^2\leq C\left(\frac{\|\sqrt{\rho}u_z\|\cdot\|\sqrt{\rho}e(\mathbf{u})\|}{h}+\|\sqrt{\rho}e(\mathbf{u})\|^2\right).
\end{equation}
\end{Theorem}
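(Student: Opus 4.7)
The plan is to reduce the three-dimensional inequality to the two-dimensional weighted Korn inequality of Theorem~\ref{th:2.0.5} by slicing the washer into meridional cross-sections at fixed $\theta$, and then to control the remaining entries of the cylindrical gradient, namely those involving $u_\theta$ or the factor $1/\rho$, by direct algebraic manipulation of the cylindrical strain together with a radial Poincar\'e inequality.

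For the slicing step, I freeze $\theta$ and work on the rectangle $T=(0,h)\times(r,R)$ in the $(z,\rho)$-plane; the hypothesis $h\leq cr$ places me in the regime of Theorem~\ref{th:2.0.5}. The planar displacement $(u_z,u_\rho)(\cdot,\theta,\cdot)$ has its 2D gradient equal to the four entries $u_{\rho,\rho},u_{\rho,z},u_{z,\rho},u_{z,z}$ of (\ref{2.1}) and its 2D strain equal to $(e_{\rho\rho},e_{zz},e_{\rho z})$. On $V_2$ the constrained component is $u_z$, so Theorem~\ref{th:2.0.5} applies directly with $(f,g)=(u_z,u_\rho)$; on $V_1$ the constrained component is $u_\rho$, and the analogous inequality (obtained by interchanging the coordinate labels in the proof of Theorem~\ref{th:2.0.5}) applies with $\|\sqrt{\rho}\,u_\rho\|$ in the position of $\|\sqrt{\rho}\,f\|$. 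Integrating the slice inequality against $d\theta$ and using Cauchy--Schwarz on the cross term $\|\sqrt{\rho}\,u_*\|\cdot\|\sqrt{\rho}\,e(\mathbf{u})\|/h$ then bounds $\|\sqrt{\rho}\,\nabla_{z\rho}(u_\rho,u_z)\|_{L^2(\Omega)}^2$ by the desired right-hand side.

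For the five remaining entries of $\nabla\mathbf{u}$, I use the identities hidden in the cylindrical strain,
\[
u_{\theta,\rho}=2e_{\rho\theta}-\tfrac{u_{\rho,\theta}-u_\theta}{\rho},\qquad u_{\theta,z}=2e_{\theta z}-\tfrac{u_{z,\theta}}{\rho},\qquad u_{\theta,\theta}=\rho\,e_{\theta\theta}-u_\rho,
\]
together with the vanishing of $u_\theta$ at $\rho=r,R$ in both $V_1$ and $V_2$. A one-dimensional Poincar\'e inequality in $\rho$ controls $\|u_\theta\|$ and $\|u_\theta/\rho\|$ by $\|u_{\theta,\rho}\|$; the $1/\rho$ factor in the curvilinear entries $(u_{\rho,\theta}-u_\theta)/\rho$ and $u_{z,\theta}/\rho$ is bounded by $1/r\leq c/h$, but after pairing with $\sqrt{\rho}$ in the weighted norm it produces only constants depending on $c$ and $R$, not on $h$. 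Each remaining entry is then either an explicit strain component or is recovered from the slice step, with mixed terms absorbed by Young's inequality.

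The main obstacle, to my mind, is the small asymmetry in the statement of Theorem~\ref{th:2.1}: on $V_2$ the right-hand side contains $\|\sqrt{\rho}\,u_z\|$, which is the natural Poincar\'e-able quantity, whereas on $V_1$ the slicing step produces $\|\sqrt{\rho}\,u_\rho\|$ instead. Reading the statement as a case distinction in which $u_z$ denotes ``the vanishing radial or vertical component'' resolves this formally; alternatively, one may genuinely trade $\|\sqrt{\rho}\,u_\rho\|$ for $\|\sqrt{\rho}\,u_z\|$ by invoking the harmonic function gradient separation of Theorem~\ref{th:2.0}, which is precisely the tool designed for such component exchanges modulo a strain error. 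Once these are handled, the final assembly is a sequence of Young's inequalities, the only delicate point being to ensure no spurious factor of $1/h$ slips into the $O(1)$ piece of the right-hand side in (\ref{2.2}).
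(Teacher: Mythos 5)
Your first step---freezing $\theta$, applying Theorem~\ref{th:2.0.5} to $(u_z,u_\rho)$ on the rectangle $(0,h)\times(r,R)$ and integrating in $\theta$---is exactly how the paper handles the $(11,13,31,33)$ block of the gradient, so that part is sound. The genuine gap is in your treatment of the remaining entries. The identities $u_{\theta,\rho}=2e_{\rho\theta}-\frac{u_{\rho,\theta}-u_\theta}{\rho}$ and $u_{\theta,z}=2e_{\theta z}-\frac{u_{z,\theta}}{\rho}$ are circular: in each one the non-strain term on the right is itself an entry of $\nabla\mathbf{u}$ that you still have to bound, and separating such a symmetric off-diagonal pair from its sum is precisely the nontrivial content of a Korn inequality---no algebraic rearrangement of the strain can accomplish it. Likewise, the one-dimensional Poincar\'e inequality in $\rho$ bounds $\|u_\theta\|$ by $\|u_{\theta,\rho}\|$, which is one of the quantities you are trying to control, so it yields nothing; and the bound $1/\rho\le 1/r\le c/h$ does \emph{not} collapse into a constant depending only on $c$ and $R$---it injects exactly the $h$-dependence you must avoid, since $r$ is allowed to tend to $0$ with $h$.

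What is actually needed (and what the paper does) is two further genuinely two-dimensional arguments. For the pair $\bigl(\frac{u_{\rho,\theta}-u_\theta}{\rho},\,u_{\theta,\rho}\bigr)$ one integrates by parts over the annular cross-section $z\equiv\mathrm{const}$, using periodicity in $\theta$ and the vanishing of $u_\theta$ (and of $u_\rho$ or of $u_{\theta,\theta}$) at $\rho=r,R$, to show that the cross term $\int\rho\,\frac{u_{\rho,\theta}-u_\theta}{\rho}\,u_{\theta,\rho}$ is controlled by $\|\sqrt{\rho}e(\mathbf{u})\|^2+\|\sqrt{\rho}e(\mathbf{u})\|\cdot\|u_\rho/\sqrt{\rho}\|$; this is then closed by the key estimate $\|u_\rho/\sqrt{\rho}\|\le 3\|\sqrt{\rho}e(\mathbf{u})\|$, proved via a Fourier decomposition in $\theta$ and a mode-by-mode algebraic argument---an ingredient entirely absent from your proposal. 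For the pair $\bigl(u_{\theta,z},\,\frac{u_{z,\theta}}{\rho}\bigr)$ one unrolls each circle $\rho\equiv\mathrm{const}$ into a rectangle $(0,h)\times(0,2\pi\rho)$ and applies the unweighted first-and-a-half Korn inequality (Theorem~\ref{th:3.1}) with \emph{periodic} boundary conditions, again invoking the estimate on $\|u_\rho/\sqrt{\rho}\|$ to pass from the planar strain entry $\frac{1}{\rho}u_{\theta,\theta}$ to the cylindrical one $\frac{u_{\theta,\theta}+u_\rho}{\rho}$. Finally, ``reading the statement as a case distinction'' so that $u_z$ means ``whichever component vanishes'' is not a proof, and Theorem~\ref{th:2.0} does not perform component exchanges of the kind you invoke; in the paper the $\|\sqrt{\rho}u_z\|$ on the right-hand side comes only from the slices involving the $z$-direction, while every $u_\rho$- and $u_\theta$-dependent quantity is absorbed into $\|\sqrt{\rho}e(\mathbf{u})\|$ by the mechanisms just described.
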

Let us remark, that the term "first and a half" Korn inequality was introduced in [\ref{bib:Gra.Har.1}], where the authors prove such
inequalities for $2D$ rectangles and then cylindrical shells. It is also interesting, that the first and a half Korn
inequality implies both, the first and the second Korn inequalities due to the Friedrichs inequality.
\begin{Theorem}[First Korn inequality]
\label{th:2.2}
Assume the requirements of Theorem~\ref{th:2.1} are fulfilled. Then
there exists a constant $C=C(c,R)>0$ depending only on $c$ and $R$
such that for any displacement $\mathbf{u}\in V_1\cup V_2$ the inequality holds:
\begin{equation}
\label{2.3}
\|\sqrt{\rho}\nabla \mathbf{u}\|^2\leq\frac{C}{h^2}\|\sqrt{\rho}e(\mathbf{u})\|^2.
\end{equation}
\end{Theorem}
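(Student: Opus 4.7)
The plan is to deduce the first Korn inequality (\ref{2.3}) from the first and a half inequality (\ref{2.2}) by establishing a weighted Poincar\'e-type estimate of the form $\|\sqrt{\rho}u_z\|\leq M(c,R)\,\|\sqrt{\rho}\nabla\mathbf{u}\|$ for every $\mathbf{u}\in V_1\cup V_2$, and then absorbing the resulting cross-term into the left-hand side via Young's inequality. Once the Poincar\'e bound on $u_z$ is in hand, Theorem~\ref{th:2.1} gives
\[
\|\sqrt{\rho}\nabla\mathbf{u}\|^2\leq C\!\left(\frac{M\,\|\sqrt{\rho}\nabla\mathbf{u}\|\,\|\sqrt{\rho}e(\mathbf{u})\|}{h}+\|\sqrt{\rho}e(\mathbf{u})\|^2\right),
\]
and splitting the cross-term as $\epsilon\|\sqrt{\rho}\nabla\mathbf{u}\|^2+\frac{M^2}{4\epsilon h^2}\|\sqrt{\rho}e(\mathbf{u})\|^2$ with $\epsilon$ chosen small enough to absorb half of $\|\sqrt{\rho}\nabla\mathbf{u}\|^2$ into the left side produces (\ref{2.3}); the residual $O(1)$ term on the right merges into the $1/h^2$ factor thanks to the standing hypothesis $h\leq cr\leq cR$.

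For $\mathbf{u}\in V_2$, the Dirichlet condition $u_z(R,\theta,z)=0$ allows a direct weighted one-dimensional estimate in the radial variable. Writing $u_z(\rho,\theta,z)=-\int_\rho^R u_{z,s}(s,\theta,z)\,ds$ and applying the Cauchy-Schwarz inequality with the weight split $s^{-1/2}\cdot s^{1/2}$ gives $u_z^2(\rho,\theta,z)\leq \ln(R/\rho)\,\|\sqrt{s}\,u_{z,s}(\cdot,\theta,z)\|_{L^2(r,R)}^2$; multiplying by $\rho$, integrating over $(r,R)\times(0,2\pi)\times(0,h)$ and using the elementary identity $\int_0^R\rho\ln(R/\rho)\,d\rho=R^2/4$ yields $\|\sqrt{\rho}u_z\|^2\leq (R^2/4)\,\|\sqrt{\rho}u_{z,\rho}\|^2\leq (R^2/4)\,\|\sqrt{\rho}\nabla\mathbf{u}\|^2$, so $M=R/2$ works on $V_2$.

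For $\mathbf{u}\in V_1$ there is no boundary condition on $u_z$, but adding an arbitrary constant to $u_z$ preserves the defining conditions of $V_1$ (which only constrain $u_\rho$ and $u_\theta$) and leaves both $\nabla\mathbf{u}$ and $e(\mathbf{u})$ pointwise unchanged; I may therefore assume without loss of generality that $\int_\Omega u_z\,\rho\,d\rho\,d\theta\,dz=0$, i.e.\ $u_z$ has zero Cartesian average on the washer. The classical Poincar\'e-Wirtinger inequality applied in Cartesian coordinates then gives $\|u_z\|_{L^2(\Omega_{\mathrm{cart}})}\leq C_{PW}\|\nabla u_z\|_{L^2(\Omega_{\mathrm{cart}})}$, which in the paper's cylindrical notation reads $\|\sqrt{\rho}u_z\|\leq C_{PW}\|\sqrt{\rho}\nabla u_z\|\leq C_{PW}\|\sqrt{\rho}\nabla\mathbf{u}\|$. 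The main technical point, and what I see as the main obstacle, is making sure that $C_{PW}$ depends only on $R$ and not on $r$ or $h$; this rests on the fact that the washer has diameter at most $2R$ uniformly in $r\in(0,R)$ and in small $h$, so its first positive Neumann eigenvalue is bounded below by a constant depending only on $R$. Combining the two cases and applying Young's inequality as outlined in the first paragraph then closes the argument and delivers the constant $C=C(c,R)$ required in (\ref{2.3}).
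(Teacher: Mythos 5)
Your proposal is correct and shares the paper's overall architecture: reduce (\ref{2.3}) to (\ref{2.2}) by proving a weighted Poincar\'e bound $\|\sqrt{\rho}u_z\|\leq M(c,R)\|\sqrt{\rho}\nabla\mathbf{u}\|$ and then absorbing the cross term (the paper does the absorption implicitly; your explicit Young-inequality step, using $h\leq cr\leq cR$ to merge the $O(1)$ remainder into the $1/h^2$ factor, is exactly what is needed). Your treatment of $V_1$ coincides with the paper's: normalize $u_z$ to zero mean and invoke the Cartesian Poincar\'e--Wirtinger inequality. Where you genuinely diverge is the $V_2$ case. The paper splits into two subcases: for $2r\geq R$ it integrates outward from $\rho=r$ and bounds $\int_r^\rho t^{-1}dt$ by $\ln(R/r)\leq\ln 2$, while for $2r<R$ it integrates inward from $\rho=R$ on the outer half-annulus and invokes a separate weighted Hardy inequality (Lemma~\ref{lem:3.5}) on the inner half, yielding the constant $5R^2$. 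You instead integrate inward from $\rho=R$ for all radii at once, accept the factor $\ln(R/\rho)$ that blows up as $\rho\to 0$, and observe that $\int_0^R\rho\ln(R/\rho)\,d\rho=R^2/4$ is finite; this single computation replaces the case split and Lemma~\ref{lem:3.5} entirely, uses only the boundary condition at the outer radius, and gives the better constant $R^2/4$. The one point to tighten is your justification of the uniform Poincar\'e constant on $V_1$: a diameter bound alone does not control the first nonzero Neumann eigenvalue of a non-convex domain (dumbbell-type domains have bounded diameter and arbitrarily small spectral gap), so the uniformity in $r\in(0,R)$ and in small $h$ must be argued from the specific product structure of the washer (e.g., by separation of variables on $A_r\times(0,h)$, where the relevant eigenvalue is $\min(\mu_1(A_r),\pi^2/h^2)$ and $\mu_1(A_r)$ stays bounded below as $r\to 0$); the paper asserts the same $C(R)$ without proof, so you are not below its level of rigor, but you should not present the diameter heuristic as the reason.
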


\begin{Theorem}[Realizability of the asymptotics]
\label{th:2.3}
Both inequalities (\ref{2.2}) and (\ref{2.3}) are sharp in the sense that the exponents $1$ and $2$ of $h$ in them are optimal as $h$ goes to zero.
\end{Theorem}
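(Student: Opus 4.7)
The plan is to exhibit an explicit one-parameter family of test displacements that realizes the claimed scalings as $h\to 0$. Motivated by the Kirchoff Ansatz mentioned in the abstract, I take
\[
u_\rho(\rho,\theta,z) = -(z-h/2)\,w'(\rho),\qquad u_\theta\equiv 0,\qquad u_z(\rho,\theta,z) = w(\rho),
\]
where $w\in C^2([r,R])$ is a fixed scalar function of $\rho$ alone, chosen so that $\mathbf{u}$ lies in the relevant space: for $\mathbf{u}\in V_1$ pick $w$ with $w'(r)=w'(R)=0$ (for example $w(\rho)=\cos(\pi(\rho-r)/(R-r))$); for $\mathbf{u}\in V_2$ pick $w$ with $w(r)=w(R)=0$ (for example $w(\rho)=\sin(\pi(\rho-r)/(R-r))$). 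In both cases $u_\theta\equiv 0$ takes care of the angular boundary conditions automatically, and $w$ is chosen independent of $h$.

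Next, I would read off the gradient directly from (\ref{2.1}). The linearity of $u_\rho$ in $z$ together with $u_\theta\equiv 0$ and $u_{z,\theta}=0$ forces the cancellation $e_{13}=e_{23}=0$ characteristic of Kirchoff-type ansatzes; the only nonzero strain entries are $e_{11}=-(z-h/2)w''$ and $e_{22}=-(z-h/2)w'/\rho$, which carry a $(z-h/2)$ factor, whereas the full gradient contains in addition the entries $u_{\rho,z}=-w'$ and $u_{z,\rho}=w'$, which do not. Using $\int_0^h(z-h/2)^2\,dz=h^3/12$, a direct computation then yields
\begin{align*}
\|\sqrt{\rho}\,e(\mathbf{u})\|^2 &= \tfrac{\pi h^3}{6}\int_r^R\!\bigl(\rho (w'')^2 + (w')^2/\rho\bigr)\,d\rho,\\
\|\sqrt{\rho}\,\nabla\mathbf{u}\|^2 &= \|\sqrt{\rho}\,e(\mathbf{u})\|^2 + 4\pi h\int_r^R\!\rho (w')^2\,d\rho,\\
\|\sqrt{\rho}\,u_z\|^2 &= 2\pi h\int_r^R\!\rho\, w^2\,d\rho,
\end{align*}
where the $\rho$-integrals are fixed positive constants independent of $h$.

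Consequently $\|\sqrt{\rho}\,\nabla\mathbf{u}\|^2=\Theta(h)$ while $\|\sqrt{\rho}\,e(\mathbf{u})\|^2=\Theta(h^3)$, so the ratio $\|\sqrt{\rho}\,\nabla\mathbf{u}\|^2/\|\sqrt{\rho}\,e(\mathbf{u})\|^2=\Theta(h^{-2})$, which shows the exponent $2$ in (\ref{2.3}) cannot be improved. For (\ref{2.2}), the first term on the right hand side is of order $h^{-1}\cdot\Theta(h^{1/2})\cdot\Theta(h^{3/2})=\Theta(h)$, matching the left hand side $\Theta(h)$, while the second term $\|\sqrt{\rho}\,e(\mathbf{u})\|^2=\Theta(h^3)$ is strictly lower order. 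Replacing $1/h$ by $1/h^{1-\varepsilon}$ in (\ref{2.2}) for any $\varepsilon>0$ would force the right hand side to be $\Theta(h^{1+\varepsilon})$ along this family, contradicting $\|\sqrt{\rho}\,\nabla\mathbf{u}\|^2=\Theta(h)$ as $h\to 0$; this proves the exponent $1$ optimal as well.

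The argument is purely constructive and presents no serious obstacle; the only point requiring attention is the shear cancellation $e_{13}=e_{23}=0$, which follows immediately from the Kirchoff structure of the Ansatz once one is careful with the cylindrical formula (\ref{2.1}). The same family serves both $V_1$ and $V_2$ via the two indicated choices of $w$.
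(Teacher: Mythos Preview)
Your proof is correct and follows essentially the same Kirchoff Ansatz strategy as the paper. The only cosmetic differences are that the paper uses a single compactly supported profile $\varphi$ on $[\tfrac{R+r}{2},R]$ (which automatically lands in $V_1\cap V_2$) in place of your two sinusoidal choices, and for (\ref{2.2}) it records an entire scaled family $\varphi(\rho/h^\alpha)$, $\alpha\in[0,\tfrac12]$, whose $\alpha=0$ member coincides with your construction.
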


\section{Hardy and Korn-like inequalities}
\label{sec:3}
In this section we recall two results proven in [\ref{bib:Harutyunyan},\ref{bib:Gra.Har.1}] and prove new
suitable Hardy and weighted Hardy-like inequalities.
The next theorem is proven in [\ref{bib:Gra.Har.1}, Theorem~3.1] for rectangles and generalized in [\ref{bib:Harutyunyan}]
for any $2D$ curved thin domains with a varying thickness. For a proof we refer to [\ref{bib:Gra.Har.1},\ref{bib:Harutyunyan}].
\begin{Theorem}
\label{th:3.1}
Assume $L>0,$ $h\in(0,1)$ and denote $T=(0,h)\times(0,L).$ Assume the displacement
$\mathbf{U}=(f,g)\in W^{1,2}(T)$ satisfies one of following the boundary condition:
\begin{itemize}
\item[(i)] $g(x,0)=0,$ for all $x\in(0,h)$ in the sense of traces,
\item[(ii)] $f(x,0)=f(x,L),$ for all $x\in(0,h)$ in the sense of traces.
\end{itemize}
Then the first and a half Korn inequality holds:
\begin{equation}
\label{3.1}
\|\nabla \mathbf{U}\|^2\leq 100\left(\frac{\|f\|\cdot\|e(\mathbf{U})\|}{h}+\|e(\mathbf{U})\|^2\right).
\end{equation}
\end{Theorem}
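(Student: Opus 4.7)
The plan is to reduce the inequality~(\ref{3.1}) to a bound on the antisymmetric part of $\nabla\mathbf{U}$. Writing $\nabla\mathbf{U}=e(\mathbf{U})+W(\mathbf{U})$ and using that in two dimensions $W(\mathbf{U})$ is determined by the single scalar $w:=\tfrac12(f_{,y}-g_{,x})$ with $\|W(\mathbf{U})\|^2=2\|w\|^2$, one has $\|\nabla\mathbf{U}\|^2=\|e(\mathbf{U})\|^2+2\|w\|^2$. Thus it suffices to show $\|w\|_{L^2(T)}^2\lesssim \tfrac{\|f\|\,\|e(\mathbf{U})\|}{h}+\|e(\mathbf{U})\|^2$.

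The starting identity would be a Stokes' theorem rewriting of the cross term: since $d(f\,dg)=(f_{,x}g_{,y}-f_{,y}g_{,x})\,dx\wedge dy$, one has $\int_T f_{,y}g_{,x}\,dx\,dy=\int_T f_{,x}g_{,y}\,dx\,dy-\int_{\partial T}f\,dg$. Combining with the algebraic identity $\|f_{,y}\|^2+\|g_{,x}\|^2=4\|e_{12}\|^2-2\int f_{,y}g_{,x}\,dx\,dy$, with $\int f_{,x}g_{,y}\,dx\,dy=\int e_{11}e_{22}\,dx\,dy\le\tfrac12(\|e_{11}\|^2+\|e_{22}\|^2)$, and with the split $\|\nabla\mathbf{U}\|^2=\|e_{11}\|^2+\|e_{22}\|^2+\|f_{,y}\|^2+\|g_{,x}\|^2$ yields the clean bound $\|\nabla\mathbf{U}\|^2\le 2\|e(\mathbf{U})\|^2+2\int_{\partial T}f\,dg$. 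The proof then reduces to estimating the boundary integral by $C\bigl(\tfrac{\|f\|\,\|e(\mathbf{U})\|}{h}+\|e(\mathbf{U})\|^2\bigr)$.

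Parsing $\int_{\partial T}f\,dg$ edge by edge, hypothesis~(i) or~(ii) eliminates one of the four contributions. In case~(i), $g(x,0)=0$ forces $g_{,x}(x,0)=0$, killing the bottom edge; in case~(ii), the periodicity $f(x,0)=f(x,L)$ combines the top and bottom edges into a single term handled by Fourier expansion in $y$. The three remaining pieces couple traces of $f$ with traces of $e_{22}=g_{,y}$ on the thin vertical sides $x\in\{0,h\}$ and, in case~(i), with $g_{,x}$ on the top side $y=L$. The thin-side traces are controlled by the geometric-mean trace inequality $\|v|_{x=0,h}\|_{L^2([0,L])}^2\lesssim \|v\|_{L^2(T)}\,\|v_{,x}\|_{L^2(T)}/h$, which supplies the sharp $1/h$ factor; pairing the trace of $f$ with the trace of $e_{22}$ via Cauchy--Schwarz produces $\|f\|\,\|e(\mathbf{U})\|/h$ directly. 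The top-side trace in case~(i) is reduced to the same pattern by writing $g_{,x}(x,L)=\partial_x\int_0^L e_{22}(x,t)\,dt$, using $g_{,x}(x,0)=0$, and then integrating by parts once more in $x$.

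The main obstacle is producing the sharp $1/h$ power rather than $1/h^2$ in the mixed term. A naive $L^2$-trace bound delivers a $1/\sqrt{h}$ factor on each of $f$ and $e(\mathbf{U})$, which compounds to $1/h$ only after both are paired correctly; the safer route is to invoke the geometric-mean trace inequality directly. A secondary delicate point is that several of the intermediate steps produce terms of the form $\|\nabla\mathbf{U}\|\cdot(\cdots)$ that must be absorbed to the left via Young's inequality with a small enough $\varepsilon$ so as to leave a numerical constant no larger than $100$ on the right.
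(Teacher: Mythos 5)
There is a genuine gap in the estimation of the boundary integral, and it is exactly the obstruction that pushes the known proofs of this theorem onto a different route. Your reduction $\|\nabla\mathbf{U}\|^2\leq 2\|e(\mathbf{U})\|^2+2\left|\int_{\partial T}f\,dg\right|$ is correct, but the remaining term cannot be handled edge by edge. On the thin sides $x\in\{0,h\}$ the integrand is $f\cdot g_{,y}=f\cdot e_{22}$, so you need an $L^2$ bound for the trace of $e_{22}$ on a vertical line. The ``geometric-mean trace inequality'' you invoke, $\|v|_{x=0}\|_{L^2(0,L)}^2\lesssim\|v\|\,\|v_{,x}\|/h$, is false as stated (it fails for $v\equiv\mathrm{const}$ and is dimensionally inhomogeneous); the correct version is $\|v(0,\cdot)\|_{L^2(0,L)}^2\leq\frac{1}{h}\|v\|_{L^2(T)}^2+2\|v\|_{L^2(T)}\|v_{,x}\|_{L^2(T)}$, and applied to $v=e_{22}$ it brings in $\|\partial_x e_{22}\|=\|g_{,xy}\|$, a second derivative controlled by neither $\|e(\mathbf{U})\|$ nor $\|f\|$. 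Indeed, for $\mathbf{U}\in W^{1,2}$ the restriction of $e_{22}$ to a line is not even defined, and no density argument can rescue a constant that secretly depends on second derivatives. The same problem recurs on the long side $y=L$ (after your rewriting, $g_{,x}(x,L)=\int_0^L g_{,xy}(x,t)\,dt$) and, in case (ii), in the difference $g_{,x}(x,0)-g_{,x}(x,L)=-\int_0^L g_{,xy}(x,t)\,dt$; note also that hypothesis (ii) makes only $f$ periodic in $y$, so a Fourier expansion of the full displacement is not available.

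This is precisely why the proof the paper points to ([\ref{bib:Gra.Har.1}], whose weighted analogue is carried out here in Theorems~\ref{th:2.0} and~\ref{th:2.0.5}) does not estimate $\int_{\partial T}f\,dg$ directly. One first replaces $f$ by its harmonic part $s$ (the solution of a problem of the form (\ref{3.17})), shows $\|\nabla(f-s)\|\leq C\|e(\mathbf{U})\|$ by testing the divergence-form identity $\triangle(f-s)=(e_{11}(\mathbf{U})-e_{22}(\mathbf{U}))_{,x}+2(e_{12}(\mathbf{U}))_{,y}$ against $f-s$, and then proves the key estimate $\|s_{,y}\|^2\lesssim\frac{1}{h}\|s_{,x}\|\,\|s\|+\|s_{,x}\|^2$ for the harmonic $s$ by integrating by parts over a continuum of sub-rectangles $T_t$ and averaging in $t$ --- which converts the would-be line traces into volume integrals --- combined with Hardy and Caccioppoli-type estimates as in Lemmas~\ref{lem:3.3} and~\ref{lem:3.4}. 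If you wish to keep the Stokes-theorem framework, you would have to perform an analogous averaging over the position of the cutting lines, at which point you are essentially reconstructing that argument; as written, the edge-by-edge trace step does not close.
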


The next lemma is the weighted version of Lemma~2.1 in [\ref{bib:Harutyunyan}], see also Lemma~2.2 in [\ref{bib:Ole.Sha.Yos.}].
\begin{Lemma}
\label{lem:3.3}
For $L>l>0$ and $h>0$ denote $T=(0,h)\times(l,L).$ Assume that the function $f\in C^2(\overline{T},\mathbb R)$ is harmonic in $T$ and
satisfies the boundary conditions $f(x,l)=f(x,L)=0$ for all $x\in[0,h].$ Denote $\delta(x,y)=\min(x,h-x)$ for $(x,y)\in \overline{T}.$ Then
there holds
\begin{equation}
\label{3.5}
\|\sqrt{y}\delta\nabla f\|_{L^2(T)}^2\leq 4\|\sqrt{y}f\|_{L^2(T)}^2.
\end{equation}
\end{Lemma}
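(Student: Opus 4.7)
The plan is to mimic the standard unweighted Hardy-type trick for harmonic functions with Dirichlet data, but with the weight $y\delta^2$ instead of $\delta^2$, and to hope that the extra factor $y$ produces only terms that cancel against the Dirichlet data. Concretely, I would start from the identity
$$
\nabla\cdot\bigl(y\delta^2 f\nabla f\bigr)=y\delta^2|\nabla f|^2+f\,\nabla(y\delta^2)\cdot\nabla f+y\delta^2 f\Delta f,
$$
use $\Delta f=0$ in $T$, and integrate over $T$. The divergence term contributes a boundary integral that vanishes: on the horizontal sides $y=l,L$ we have $f=0$, while on the vertical sides $x=0,h$ we have $\delta=0$. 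Writing $\nabla(y\delta^2)=(2y\delta\delta_x,\delta^2)$, one is left with
$$
\int_T y\delta^2|\nabla f|^2\,dx\,dy=-2\int_T y\delta\delta_x f f_x\,dx\,dy-\int_T \delta^2 f f_y\,dx\,dy.
$$

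The crux of the argument is the observation that the second integral on the right is identically zero: since $\delta^2$ depends only on $x$,
$$
\int_T \delta^2 f f_y\,dx\,dy=\tfrac{1}{2}\int_0^h \delta^2(x)\bigl[f(x,y)^2\bigr]_{y=l}^{y=L}\,dx=0
$$
by the Dirichlet conditions at $y=l$ and $y=L$. This is the step that makes the weighted analogue work just as cleanly as the unweighted one — without the boundary conditions in $y$, this $f f_y$ term would have no reason to disappear and the constant would degrade (or the inequality fail outright).

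The remaining term is handled by Cauchy–Schwarz together with the elementary bound $|\delta_x|\leq 1$:
$$
\Bigl|2\int_T y\delta\delta_x f f_x\,dx\,dy\Bigr|\leq 2\bigl\|\sqrt{y}\,\delta f_x\bigr\|_{L^2(T)}\bigl\|\sqrt{y}\,f\bigr\|_{L^2(T)}\leq 2\bigl\|\sqrt{y}\,\delta\nabla f\bigr\|_{L^2(T)}\bigl\|\sqrt{y}\,f\bigr\|_{L^2(T)}.
$$
Absorbing one factor of $\|\sqrt{y}\,\delta\nabla f\|_{L^2(T)}$ on the left-hand side of the previous identity yields $\|\sqrt{y}\,\delta\nabla f\|_{L^2(T)}\leq 2\|\sqrt{y}\,f\|_{L^2(T)}$, which is exactly~\eqref{3.5}.

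I expect the main obstacle to be identifying the correct multiplier; once one writes down $y\delta^2 f$ and discovers that the parasitic $\delta^2 f f_y$ term integrates to zero thanks to the Dirichlet data in $y$, the rest is a direct computation. No regularity issues arise because $f\in C^2(\overline T)$ and $\delta$ is Lipschitz with $|\delta_x|=1$ almost everywhere, so all integrations by parts are legitimate.
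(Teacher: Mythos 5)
Your proposal is correct and follows essentially the same route as the paper: the multiplier $y\delta^2 f$, the vanishing of the boundary terms and of the parasitic $\int_T\delta^2 f f_y$ term via the Dirichlet data, and the final absorption (the paper uses the arithmetic–geometric mean inequality where you use Cauchy--Schwarz and divide, but both yield the constant $4$).
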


\begin{proof}
Integrating by parts and utilizing the boundary conditions on $f$ and $\delta,$ the harmonicity of $f$ and the facts that
$\delta$ is independent of $y$ and $|\frac{\partial\delta}{\partial x}|=1,$ we obtain

\begin{align}
\label{3.6}
\|\sqrt{y}\delta\nabla f\|_{L^2(T)}^2&=\int_{T}y\delta^2|\nabla f|^2dxdy\\ \nonumber
&=-\int_{T}f\frac{\partial}{\partial x}\left(y\delta^2\frac{\partial}{\partial x}\right)
-\int_{T}f\frac{\partial}{\partial y}\left(y\delta^2\frac{\partial}{\partial y}\right)\\ \nonumber
&=-\int_{T}fy\delta^2\triangle f-2\int_{T}fy\delta\frac{\partial \delta}{\partial x}\frac{\partial f}{\partial x}-
\int_{T}f\delta^2\frac{\partial f}{\partial y}\\ \nonumber
&=-2\int_{T}fy\delta\frac{\partial \delta}{\partial x}\frac{\partial f}{\partial x}.
\end{align}
By the arithmetic and geometric mean inequality we have
\begin{align}
\label{3.7}
\left|2\int_{T}fy\delta\frac{\partial \delta}{\partial x}\frac{\partial f}{\partial x}\right|&\leq
2\int_{T}yf^2+\frac{1}{2}\int_{T}y\left|\delta\frac{\partial f}{\partial x}\right|^2\\ \nonumber
&\leq 2\int_{T}yf^2+\frac{1}{2}\int_{T}y|\delta\nabla f|^2,
\end{align}
thus combining (\ref{3.6}) and (\ref{3.7}) we arrive at (\ref{3.5}).
\end{proof}

The next lemma is a Hardy inequality proven in [\ref{bib:Harutyunyan}, Lemma~2.4], that was inspired by the work of
Kondratiev and Oleinik in [\ref{bib:Kon.Ole.1},\ref{bib:Kon.Ole.2}]. The proof is very similar to the proof of Lemma~\ref{lem:3.2},
see [\ref{bib:Harutyunyan}] for details.
\begin{Lemma}
\label{lem:3.4}
Let $b>a>0,$ $\epsilon\in(0,1],$ and let $f\colon[a,b]\to\mathbb R$ be absolutely continuous. Then
\begin{equation}
\label{3.8}
\int_{a+\epsilon(b-a)}^bf^2(t)d t\leq \frac{2}{\epsilon}\int_a^{a+\epsilon(b-a)}f^2(t)d t+4\int_a^{b}f'^2(t)(b-t)^2d t.
\end{equation}
\end{Lemma}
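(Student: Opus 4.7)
The plan is to fix the notation $c = a + \epsilon(b-a)$, so that $c - a = \epsilon(b-a)$ and $b - c = (1-\epsilon)(b-a)$, and then split the target bound into two moves: first, an integration by parts on $[c,b]$ that converts $\int_c^b f^2$ into a boundary contribution at $t=c$ plus a weighted first-derivative integral, and second, an averaging argument on $[a,c]$ that controls $f^2(c)$ by the ingredients on the right-hand side of (\ref{3.8}).

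For the first step, I would use $\frac{d}{dt}(t-b) = 1$ and integrate by parts,
\[
\int_c^b f^2(t)\,dt = (b-c) f^2(c) + 2\int_c^b (b-t) f(t) f'(t)\,dt,
\]
and then apply the arithmetic-geometric mean inequality in the form $2|(b-t) f f'| \leq \tfrac12 f^2 + 2(b-t)^2 f'^2$. Absorbing $\tfrac12\int_c^b f^2$ back to the left gives
\[
\int_c^b f^2(t)\,dt \leq 2(b-c) f^2(c) + 4\int_c^b (b-t)^2 f'^2(t)\,dt.
\]

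For the second step, for any $s \in [a,c]$ I would write $f(c) = f(s) + \int_s^c f'(u)\,du$, square using $(x+y)^2 \le 2x^2 + 2y^2$, and estimate the derivative term by weighted Cauchy--Schwarz against the weight $(b-u)^{-2}$:
\[
\left(\int_s^c f'(u)\,du\right)^2 \leq \int_s^c f'^2(u)(b-u)^2\,du \cdot \int_s^c \frac{du}{(b-u)^2} \leq \frac{1}{b-c}\int_a^c f'^2(u)(b-u)^2\,du,
\]
because $\int_s^c (b-u)^{-2}\,du = (b-c)^{-1} - (b-s)^{-1} \leq (b-c)^{-1}$. Averaging the resulting pointwise bound over $s \in [a,c]$ and multiplying by $2(b-c)$ produces
\[
2(b-c) f^2(c) \leq \frac{4(b-c)}{c-a}\int_a^c f^2\,ds + 4\int_a^c f'^2(u)(b-u)^2\,du,
\]
and using $(b-c)/(c-a) = (1-\epsilon)/\epsilon$ and substituting into the bound from the first step yields an inequality of the same structure as (\ref{3.8}):
\[
\int_c^b f^2\,dt \leq \frac{4(1-\epsilon)}{\epsilon}\int_a^c f^2\,ds + 4\int_a^b f'^2(t)(b-t)^2\,dt.
\]

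The main obstacle is tightening the coefficient on $\int_a^c f^2$ from $\tfrac{4(1-\epsilon)}{\epsilon}$ down to the stated $\tfrac{2}{\epsilon}$. The strategy above recovers the correct structure and the optimal constant $4$ on the derivative term, but squeezing out the remaining factor requires joint tuning of the two arithmetic-geometric mean splits, namely replacing $(x+y)^2 \le 2(x^2 + y^2)$ by $(x+y)^2 \le (1+\alpha) x^2 + (1 + \alpha^{-1}) y^2$ and simultaneously retuning the parameter in step one so that the two coefficients balance out. Since the paper defers the details to [\ref{bib:Harutyunyan}, Lemma 2.4] and notes that the argument is very similar to that of Lemma~\ref{lem:3.2}, the remaining work is a matter of optimizing free parameters rather than modifying the overall strategy.
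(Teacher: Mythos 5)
Your first step is exactly right and already delivers the sharp coefficient $4$ on the derivative term, but as written the argument only proves the weaker inequality with $\tfrac{4(1-\epsilon)}{\epsilon}$ in place of $\tfrac{2}{\epsilon}$, and your proposed repair does not close the gap. The obstruction is structural: in step one the coefficient produced on $\int_c^b f'^2(b-t)^2\,dt$ is $\tfrac{1}{\theta(1-\theta)}\ge 4$ for any split $2|(b-t)ff'|\le\theta f^2+\theta^{-1}(b-t)^2f'^2$, so you are forced to take $\theta=\tfrac12$ there; no budget on the derivative term can be traded away. Then in step two, with your bound $\int_s^c(b-u)^{-2}du\le\tfrac{1}{b-c}$, the split $(x+y)^2\le(1+\alpha)x^2+(1+\alpha^{-1})y^2$ requires $\alpha\ge 1$ (to keep the derivative coefficient $\le 4$) and simultaneously $\alpha\le\tfrac{\epsilon}{1-\epsilon}$ (to reach $\tfrac{2}{\epsilon}$), which is impossible for $\epsilon<\tfrac12$. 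So ``joint tuning of the two arithmetic-geometric mean splits'' alone cannot produce (\ref{3.8}); you would additionally need the sharper estimate $\int_s^c(b-u)^{-2}du\le\tfrac{c-a}{(b-c)^2}$ in the regime $\epsilon<\tfrac12$. Note, however, that for the single application of this lemma in the paper one takes $\epsilon=\tfrac12$, where your constant $\tfrac{4(1-\epsilon)}{\epsilon}=4$ coincides with $\tfrac{2}{\epsilon}$, so your weaker version would in fact suffice downstream.

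The intended proof (it is the one-variable analogue of the proof of Lemma~\ref{lem:3.5} given in Section~\ref{sec:3}, to which the text points) avoids your second step entirely: instead of integrating by parts from the fixed endpoint $c=a+\epsilon(b-a)$, integrate by parts on $[x,b]$ for a point $x\in[a,c]$ to be chosen, obtaining
\begin{equation*}
\int_c^b f^2(t)\,dt\le\int_x^b f^2(t)\,dt\le 2(b-x)f^2(x)+4\int_a^b f'^2(t)(b-t)^2\,dt,
\end{equation*}
and then pick $x$ by the mean value theorem so that $f^2(x)=\tfrac{1}{c-a}\int_a^c f^2(s)\,ds$. Since $b-x\le b-a=\tfrac{c-a}{\epsilon}$, the boundary term is at most $\tfrac{2}{\epsilon}\int_a^c f^2$, which is (\ref{3.8}) with the stated constants and with no second arithmetic-geometric mean split to pay for. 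In short: your decomposition is the right one, but the missing idea is to let the mean value theorem choose the base point of the integration by parts rather than controlling $f^2(c)$ by a separate averaging and Cauchy--Schwarz argument.
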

The last lemma is again a Hardy inequality that will be utilized in the proof of Theorem~\ref{th:2.2}.

\begin{Lemma}
\label{lem:3.5}
Let $R>2r>0$ and let the function $f\colon[r,R]\to\mathbb R$ be absolutely continous in $[r,R]$ such that $f(r)=0.$ Then there holds
\begin{equation}
\label{3.8.1}
\int_r^{\frac{R+r}{2}}tf^2(t)dt\leq 4\int_{\frac{R+r}{2}}^R tf^2(t)dt+R^2\int_r^R tf'^2(t)dt.
\end{equation}
\end{Lemma}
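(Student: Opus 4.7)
The plan is to bridge the inner half $[r, m]$, where $m = (R+r)/2$, and the outer half $[m,R]$ via the fundamental theorem of calculus. For any $t \in [r,m]$ and any $s \in [m,R]$, writing $f(t) = f(s) - \int_t^s f'(\tau)\,d\tau$ and applying $(a-b)^2 \leq 2a^2 + 2b^2$ together with Cauchy--Schwarz on $\int_t^s f'\,d\tau$ yields the pointwise estimate
\[
f(t)^2 \leq 2 f(s)^2 + 2(s-t)\int_t^s f'(\tau)^2\,d\tau.
\]

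Multiplying by $t$, averaging over $s \in [m,R]$ against the uniform measure $\tfrac{1}{R-m}\,ds$, integrating over $t \in [r,m]$, and swapping orders via Fubini so as to isolate $f'(\tau)^2$ gives
\[
\int_r^m t f(t)^2\,dt \leq \frac{m^2-r^2}{R-m}\int_m^R f(s)^2\,ds + \frac{2}{R-m}\int_r^R f'(\tau)^2 K(\tau)\,d\tau,
\]
where $K(\tau)$ is the integral of $t(s-t)$ over $\{(t,s) : r \leq t \leq \min(\tau,m),\ \max(\tau,m) \leq s \leq R\}$. The first-term coefficient simplifies to $(R+3r)/2$; using $f(s)^2 \leq (s/m)f(s)^2$ for $s \geq m$ then bounds it by $\tfrac{R+3r}{R+r}\int_m^R s f(s)^2\,ds \leq 3\int_m^R s f^2\,ds$, safely within the target $4\int_m^R s f^2\,ds$.

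For the second term I would evaluate $\int_m^R (s-t)\,ds = (R-m)\bigl[(R+m)/2 - t\bigr]$ and the analogous formula for $s \in [\tau,R]$ in closed form, and then split into the cases $\tau \in [r,m]$ and $\tau \in [m,R]$. In the first case, dropping a nonnegative cubic correction gives $\tfrac{2K(\tau)}{R-m} \leq \tfrac{R+m}{2}(\tau^2 - r^2) \leq R\tau^2 \leq R^2\tau$, since $(R+m)/2 \leq R$ and $\tau \leq R$. In the second case, the factor $(R-\tau)/(R-m) \leq 1$ reduces the bound to $\tfrac{2K(\tau)}{R-m} \leq \tfrac{R+\tau}{2}(m^2-r^2) \leq R \cdot m\tau \leq R^2\tau$, using $\tau \geq m$, $m^2 - r^2 \leq m^2 \leq m\tau$, and $m \leq R$. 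Integrating against $f'(\tau)^2$ bounds the second term by $R^2\int_r^R \tau f'(\tau)^2\,d\tau$, which combined with the first-term estimate proves the lemma.

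The main obstacle is the case-by-case verification of the pointwise kernel estimate $\tfrac{2K(\tau)}{R-m} \leq R^2\tau$, but once the inner $s$- or $t$-integral is written down explicitly the remaining inequalities are elementary. The hypothesis $R > 2r$ ensures the two halves are well-separated and keeps the first-term coefficient $(R+3r)/(R+r)$ strictly below $5/3$, providing comfortable slack below the target $4$; the boundary condition $f(r)=0$ is the setting under which the lemma is applied in Theorem~\ref{th:2.2}.
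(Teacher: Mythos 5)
Your proof is correct, and it takes a genuinely different route from the paper. The paper's argument is an integration-by-parts identity, $\int_r^x tf^2\,dt = x^2f^2(x)-\int_r^x\bigl[tf^2+2t^2ff'\bigr]dt$ (this is where $f(r)=0$ enters), followed by the arithmetic--geometric mean inequality to absorb the cross term, and then a mean value theorem argument choosing $x\in\bigl[\tfrac{R+r}{2},R\bigr]$ so that $x^2f^2(x)$ equals the average of $t^2f^2$ over the outer half; the hypothesis $R>2r$ is used there to turn the resulting factor $\tfrac{2R}{R-r}$ into the constant $4$. Your argument instead bridges the two halves directly with the fundamental theorem of calculus, averages over $s$ in the outer half, and controls the resulting kernel $K(\tau)$ by the explicit case analysis you describe; I checked the kernel bounds in both cases ($\tau\le m$ and $\tau\ge m$) and the coefficient $\tfrac{m^2-r^2}{R-m}=\tfrac{R+3r}{2}$, and they are all correct. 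What your route buys is generality: you never use $f(r)=0$, and in fact you never need $R>2r$ either, since $\tfrac{R+3r}{R+r}\le 3<4$ for all $0<r<R$ (your closing remark attributes more to the hypothesis $R>2r$ than is actually needed). What the paper's route buys is brevity: the integration-by-parts identity plus the mean value theorem dispatches the lemma in a few lines, at the cost of genuinely requiring both hypotheses. Since the lemma is only ever applied to $u_z$ with the boundary condition in force, both proofs serve equally well in Theorem~\ref{th:2.2}.
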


\begin{proof}
For any $x\in[\frac{R+r}{2},R]$ we have by the integration by parts formula, that
$$
\int_r^x tf^2(t)dt=x^2f^2(x)-\int_r^x [tf^2(t)+2t^2f(t)f'(t)]dt,
$$
thus we get by the geometric and arithmetic mean inequality, that
\begin{align*}
\int_r^x tf^2(t)dt&=\frac{x^2f^2(x)}{2}-\int_r^x t^2f(t)f'(t)dt\\
&\leq\frac{x^2f^2(x)}{2}+\frac{1}{2}\int_r^x [tf^2(t)+t^3f'^2(t)]dt\\
&\leq\frac{x^2f^2(x)}{2}+\frac{1}{2}\int_r^x tf^2(t)dt+\frac{R^2}{2}\int_r^xtf'^2(t)dt,
\end{align*}
from where we get
\begin{equation}
\label{3.8.2}
\int_r^x tf^2(t)dt\leq x^2f^2(x)+R^2\int_r^xtf'^2(t)dt.
\end{equation}
By the mean value theorem the point $x\in[\frac{R+r}{2},R]$ can be chosen such that
$$x^2f^2(x)=\frac{2}{R-r}\int_{\frac{R+r}{2}}^R t^2f^2(t)dt,$$
thus due to the inequality $R>2r$ we get
\begin{equation}
\label{3.8.3}
x^2f^2(x)\leq \frac{2R}{R-r}\int_{\frac{R+r}{2}}^R tf^2(t)dt\leq 4\int_{\frac{R+r}{2}}^R tf^2(t)dt.
\end{equation}
A combination of the estimates (\ref{3.8.2}) and (\ref{3.8.3}) completes the proof.
\end{proof}
\section{Inequalities in two dimensions}
\label{sec:4}
\begin{proof}[Proof of Theorem~\ref{th:2.0}]
Note that this is the weighted version of Lemma~4.1 in [\ref{bib:Gra.Har.1}] but can not be derived from
it as the term $\sqrt{y}$ is not controllable from below as $l,h\to 0.$ Denote $T_t=(\frac{h}{2}-t,\frac{h}{2}+t)\times(l,L)$ and
$T_t'=(0,t)\times(l,L)$ for all $t\in(0,\frac{h}{2}).$ We have integrating by parts, that

\begin{align}
\label{3.10}
&\int_{T_t}y|\nabla f|^2dxdy=\int_{T_t}y\left(\frac{\partial f}{\partial x}\cdot\frac{\partial f}{\partial x}
+\frac{\partial f}{\partial y}\cdot\frac{\partial f}{\partial y}\right)dxdy\\ \nonumber
&=\int_{\{\frac{h}{2}+t\}\times[l,L]}yf\frac{\partial f}{\partial x}dy-\int_{\{\frac{h}{2}-t\}\times[l,L]}yf\frac{\partial f}{\partial x}dy-
\int_{T_t}y\frac{\partial^2 f}{\partial x^2}dxdy-\int_{T_t}f\left(y\frac{\partial^2 f}{\partial y^2}+\frac{\partial f}{\partial y}\right)dxdy\\ \nonumber
&=\int_{\{\frac{h}{2}+t\}\times[l,L]}yf\frac{\partial f}{\partial x}dy-\int_{\{\frac{h}{2}-t\}\times[l,L]}yf\frac{\partial f}{\partial x}dy,
\end{align}
thus we get
\begin{equation}
\label{3.11}
\int_{T_t}y|\nabla f|^2dxdy\leq
\int_{\{\frac{h}{2}+t\}\times[l,L]}\left|yf\frac{\partial f}{\partial x}\right|dy+
\int_{\{\frac{h}{2}-t\}\times[l,L]}\left|yf\frac{\partial f}{\partial x}\right|dy.
\end{equation}
Integrating the last inequality in $t$ from $0$ to $\frac{h}{2}$ and appying the Schwartz inequality we obtain
$$
\int_0^\frac{h}{2}\int_{T_t}y|\nabla f|^2dxdydt\leq \|\sqrt{y}f_{,x}\|_{L^2(T)}\cdot\|\sqrt{y}f\|_{L^2(T)}.
$$
As the integral $g(t)=\int_{T_t}y|\nabla f|^2dxdy$ is increasing in $t\in[0,\frac{h}{2}],$ then we get from the last inequality, that
\begin{equation}
\label{3.12}
\int_{T_{\frac{h}{4}}}y|\nabla f|^2\leq \frac{4}{h}\|\sqrt{y}f_{,x}\|_{L^2(T)}\cdot\|\sqrt{y}f\|_{L^2(T)}.
\end{equation}
Next we fix any point $y\in[l,L]$ and apply Lemma~\ref{lem:3.4} to the function $\sqrt{y}\frac{\partial f}{\partial y}(x,y)$
on the segment with the endpoints at $(0,y)$ and $(\frac{h}{2},y)$ as a function in $x$ and with $\epsilon=\frac{1}{2}.$ Therefore we get
\begin{equation*}
\int_{0}^{\frac{h}{4}} y|f_{,y}(x,y)|^2dx\leq 4\int_{\frac{h}{4}}^{\frac{h}{2}} y|f_{,y}(x,y)|^2dx+4\int_0^{\frac{h}{2}} x^2y|f_{,xy}(x,y)|^2dx,
\end{equation*}
integrating which in $y\in[l,L]$ we get
\begin{equation}
\label{3.13}
\int_{T_{\frac{h}{4}}'} y|f_{,y}(x,y)|^2\leq 4\int_{T_{\frac{h}{2}}'\setminus T_{\frac{h}{4}}'} y|f_{,y}(x,y)|^2+
4\int_{T_{\frac{h}{2}}'}yx^2|f_{,xy}(x,y)|^2.
\end{equation}
Next we apply Lemma~\ref{lem:3.3} to the function $f_{,x}$ in the domain $T.$ It is clear that $\delta=x$ when $(x,y)\in T_{\frac{h}{2}}'$ and that
$\triangle(f_{,x})=0$ in $T,$ thus we get
\begin{equation}
\label{3.14}
\int_{T_{\frac{h}{2}}'}yx^2|f_{,xy}(x,y)|^2\leq \int_{T}y\delta ^2|\nabla f_{,x}(x,y)|^2\leq 4\int_{T}y|f_{,x}(x,y)|^2.
\end{equation}
Combining now the estimates (\ref{3.13}) and (\ref{3.14}) we get
\begin{equation*}
\int_{T_{\frac{h}{4}}'} y|f_{,y}(x,y)|^2\leq 4\int_{T_{\frac{h}{2}}'\setminus T_{\frac{h}{4}}'} y|f_{,y}(x,y)|^2
+16\int_{T}y|f_{,x}(x,y)|^2,
\end{equation*}
summing which with (\ref{3.12}) and applying again (3.12), we finally disciver
\begin{align}
\label{3.15}
\int_{T_{\frac{h}{2}}'} y|f_{,y}(x,y)|^2&\leq 4\int_{T_{\frac{h}{2}}'\setminus T_{\frac{h}{4}}'} y|f_{,y}(x,y)|^2
+16\int_{T}y|f_{,x}(x,y)|^2+\frac{4}{h}\|\sqrt{y}f_{,x}\|_{L^2(T)}\cdot\|\sqrt{y}f\|_{L^2(T)}\\ \nonumber
&\leq 4\int_{T_{\frac{h}{4}}} y|f_{,y}(x,y)|^2+16\int_{T}y|f_{,x}(x,y)|^2+\frac{4}{h}\|\sqrt{y}f_{,x}\|_{L^2(T)}\cdot\|\sqrt{y}f\|_{L^2(T)}\\ \nonumber
&\leq 16\int_{T}y|f_{,x}(x,y)|^2+\frac{20}{h}\|\sqrt{y}f_{,x}\|_{L^2(T)}\cdot\|\sqrt{y}f\|_{L^2(T)}.
\end{align}
A similar inequality for the right half of the rectangle $T$ is analogous, thus the proof is finished.
\end{proof}

\begin{proof}[Propf of Theorem~\ref{th:2.0.5}]
It is a well established technique, that when proving a Korn inequality in a domain $\Omega$, one passes from the first $f$ component of the displacement
to its harmonic part $s$ and proves the inequality for $s,$ which then yields the inequality for $f$
as the norm $\|f-s\|_{W_{1,2}}$ is controlled by the norm $\|e(\mathbf{U})\|_{L^2}.$ It turns out, that this technique applies also for
the above weighted first and a half Korn inequality in a thin rectangle $T=(0,h)\times(l,L)$ with the weight $y,$ which
is the appropriate inequality for a washer angular cross section as will be seen in Section~\ref{sec:4}.
We are going to use that technique combined with Theorem~\ref{th:2.0} in the proof. First of all by a standard density argument
we can without loss of generality assume that $U$ is of class $C^2$ up to the boundary of $T.$ Consider then the harmonic part of
$f,$ i.e., the solution of the Dirichlet initial value problem:
\begin{equation}
\label{3.17}
\begin{cases}
\triangle s(x,y)=0, & (x,y)\in T, \\
s(x,y)=f(x,y), & (x,y)\in \partial T.
\end{cases}
\end{equation}
Then it is clear, that
$$\triangle(f-s)=\triangle f=(f_{,x})_{,x}+(f_{,y}+g_{,x})_{,y}-(g_{,y})_{x}=(e_{11}(\mathbf{U})-e_{22}(\mathbf{U}))_{,x}+2(e_{12}(\mathbf{U}))_{,y},$$
thus we have by an integration by parts,
\begin{align*}
\int_{T}&y|\nabla(f-s)|^2dxdy=\int_{T}y((f-s)_{,xx}+(f-s)_{,yy})dxdy\\
&=-\int_{T}(f-s)(y(f-s)_{,x})_{,x}+(f-s)(y(f-s)_{,y})_{,y}dxdy\\
&=-\int_{T}(f-s)((y(f-s)_{,x})_{,x}+(y(f-s)_{,y})_{,y})dxdy\\
&=-\int_{T}[(f-s)y\triangle(f-s)+(f-s)(f-s)_{,y}]dxdy\\
&=-\int_{T}y(f-s)[(e_{11}(\mathbf{U})-e_{22}(\mathbf{U}))_{,x}+2(e_{12}(\mathbf{U}))_{,y}]dxdy\\
&=\int_{T}y(f-s)_{,x}(e_{11}(\mathbf{U})-e_{22}(\mathbf{U}))+2\int_{T}y(f-s)_{,y}e_{12}(\mathbf{U})+2\int_{T}(f-s)e_{12}(\mathbf{U})dxdy\\
\end{align*}
hence we get by the Schwartz inequality,
\begin{align}
\label{3.18}
\|\sqrt{y}\nabla(f-s)\|^2&\leq 3\|\sqrt{y}\nabla(f-s)\|\|\sqrt{y}e(\mathbf{U})\|+2\left\|\frac{f-s}{\sqrt{y}}\right\|\|\sqrt{y}e(\mathbf{U})\|\\ \nonumber
&\leq 3\|\sqrt{y}\nabla(f-s)\|\|\sqrt{y}e(\mathbf{U})\|+\frac{2}{\sqrt l}\|f-s\|\|\sqrt{y}e(\mathbf{U})\|\\ \nonumber
&\leq 3\|\sqrt{y}\nabla(f-s)\|\|\sqrt{y}e(\mathbf{U})\|+\frac{2}{\sqrt{ch}}\|f-s\|\|\sqrt{y}e(\mathbf{U})\|.
\end{align}
On the other hand as $f-s$ vanishes on the entire $\partial T,$ we have for a fixed $y\in(l,L),$ that
\begin{equation*}
\sqrt{y}(f-s)(x,y)=\int_0^x\sqrt{y}(f-s)_{,t}(t,y)dt,
\end{equation*}
thus we get by the Schwartz inequality, that
\begin{equation*}
y(f-s)^2(x,y)\leq x \int_0^x y(f-s)_{,t}^2(t,y)dt\leq h\int_0^h y(f-s)_{,t}^2(t,y)dt,
\end{equation*}
which integrating first in $x\in(0,h)$ then in $y\in(l,L)$ we arrive at
\begin{equation}
\label{3.19}
\|\sqrt{y}(f-s)\|^2\leq h^2 \|\sqrt{y}\nabla(f-s)\|^2.
\end{equation}
Similarly we have
\begin{equation}
\label{3.19.5}
\|(f-s)\|\leq C\sqrt h\|\sqrt{y}\nabla(f-s)\|,
\end{equation}
for some constant $C$ depending only on $c.$
Combining now (\ref{3.18}) and (\ref{3.19}) and \ref{3.19.5} we obtain
\begin{align}
\label{3.20}
\|\sqrt{y}\nabla(f-s)\|&\leq C\|\sqrt{y}e(\mathbf{U})\|\\ \nonumber
\|\sqrt{y}(f-s)\|&\leq Ch\|\sqrt{y}e(\mathbf{U})\|.
\end{align}
Next we apply Theorem~\ref{th:2.0} to the harmonic function $s$ to get
\begin{equation*}
\|\sqrt{y}s_{,y}\|^2\leq 40\left(\frac{\|\sqrt{y}s_{,x}\|\cdot\|\sqrt{y}s\|}{h}+\|\sqrt{y}s_{,x}\|^2\right),
\end{equation*}
from where and the estimates in (\ref{3.20}) we get for sufficiently small $h,$ that
\begin{align}
\label{3.21}
&\|\sqrt{y}f_{,y}\|^2\leq 2 \|\sqrt{y}s_{,y}\|^2+2\|\sqrt{y}\nabla(f-s)\|^2\\ \nonumber
&\leq 40\left(\frac{\|\sqrt{y}s_{,x}\|\cdot\|\sqrt{y}s\|}{h}+\|\sqrt{y}s_{,x}\|^2\right)+8h^2\|\sqrt{y}e(\mathbf{U})\|\\ \nonumber
&\leq 40\left(\frac{(\|\sqrt{y}(s_{,x}-f_{,x})\|+\|f_{,x}\|)(\|\sqrt{y}(s-f)\|+\|\sqrt{y}f\|)}{h}
+2\|\sqrt{y}(s-f)_{,x}\|^2+2\|\sqrt{y}f_{,x}\|^2\right)\\ \nonumber
&+8h^2\|\sqrt{y}e(\mathbf{U})\|\\ \nonumber
&\leq C\left(\frac{\|\sqrt{y}f_{,x}\|\cdot\|\sqrt{y}f\|}{h}+\|\sqrt{y}e(\mathbf{U})\|^2\right)\\ \nonumber
&\leq C\left(\frac{\|\sqrt{y}f\|\cdot\|\sqrt{y}e(\mathbf{U})\|}{h}+\|\sqrt{y}e(\mathbf{U})\|^2\right).
\end{align}
The other secondary diagonal component $g_{,x}$ of the symmetrized gradient $e(\mathbf{U})$ is estimated in terms of $f_{,y}$
and the term $e_{12}(\mathbf{U})$ of the symmetrized gradient $e(\mathbf{U})$ by the triangle inequality, thus inequality (\ref{3.21})
finishes the proof.
\end{proof}

\section{Proofs of Theorems~\ref{th:2.1}-\ref{th:2.3}.}
\label{sec:5}
We start by recalling the main strategy for proving $3D$ Korn inequalities developed in [\ref{bib:Gra.Har.1}].
In the case when there is enough boundary data, one can reduce a $3D$ Korn inequality to three
$2D$ Korn inequalities, taking the body cross sections parallel to the coordinate planes, which is the same as
fixing one of the variables in the inequality. As shown in [\ref{bib:Gra.Har.1}]
this approach works for cylindrical shells,
and thus we can expect it to work for washers too, with a difference that the asyptotics of the Korn constant
in the second Korn inequality drops from $h^{1.5}$ to $h^2.$
The main difficulty in proving the sharp Korn inequalities for cylindrical shells
in [\ref{bib:Gra.Har.1}], was the lack of appropriate $2D$ Korn inequalities,
as all previously known inequalities for rectangles had a constant scaling like $h^2.$
In the case of a washer the main difficulty is that
upon passing from Cartesian coordinates to cylindrical ones, an extra factor $\rho$, the Jacobin
occurs in all integrals, which is not controllable from below when the inner radius $r$ of the washer
goes to zero. This is in contrast to cylindrical shells, as in that case $\rho$ stays close to the
radius of the inner circle and can be regarded as a constant.
We demonstrate below how one can overcome that singularity issue.
We first prove Theorem~\ref{th:2.1} and as will be seen later Theorem~\ref{th:2.2} is a direct consequence
of Theorem~\ref{th:2.1} and the Poincar\'e inequality.
\begin{proof}[Proof of Theorem~\ref{th:2.1}]
 As mentioned in the beginning of the section we consider the
$\rho,\theta,z\equiv const$ cross sections of the washer, which is the same as proving the inequality block-by block.
\begin{itemize}
\item{}[\textbf{The $z\equiv const$ cross section}]. The cross section $z\equiv const$ corresponds to the $2\times 2$ block of the gradient matrix built by the elements $11, 12, 21, 22.$ We aim to prove, that
\begin{equation}
\label{4.1}
\left\|\sqrt{\rho}\frac{u_{\rho,\theta}-u_\theta}{\rho}\right\|^2+\|\sqrt{\rho}u_{\theta,\rho}\|^2\leq 12\|\sqrt{\rho}e(\mathbf{u})\|^2.
\end{equation}
Denote $T=(r,R)\times (0,2\pi).$ Then for any fixed $z\in [0,h]$ we have integrating by parts and using the boundary conditions in
$\rho$ and the periodicity in $\theta$, that
\begin{align}
\label{4.2}
\int_{T}\rho\frac{u_{\rho,\theta}-u_\theta}{\rho}u_{\theta,\rho}d\rho d\theta
&=\int_{T}(u_{\rho,\theta}-u_\theta)u_{\theta,\rho}d\rho d\theta\\ \nonumber
&=\int_{T}u_{\rho,\theta}u_{\theta,\rho}d\rho d\theta\\ \nonumber
&=-\int_{T}u_{\rho}u_{\theta,\rho\theta}d\rho d\theta\\ \nonumber
&=\int_{T}u_{\rho,\rho}u_{\theta,\theta}d\rho d\theta\\ \nonumber
&=\int_{T}\rho u_{\rho,\rho}\frac{u_{\theta,\theta}+u_\rho}{\rho}d\rho d\theta-\int_{T}u_{\rho,\rho}u_\rho d\rho d\theta,\\ \nonumber
\end{align}
thus integrating the last inequality in $z\in[0,h]$ and applying the Schwartz inequality to both summands we discover
\begin{equation}
\label{4.3}
\left|\int_{\Omega}\rho\frac{u_{\rho,\theta}-u_\theta}{\rho}u_{\theta,\rho}d\rho d\theta\right|\leq
\|\sqrt{\rho}e(\mathbf{u})\|^2+\|\sqrt{\rho}e(\mathbf{u})\|\cdot\left\|\frac{u_\rho}{\sqrt{\rho}}\right\|.
\end{equation}
We then get utilizing (\ref{4.3}),
\begin{align}
\label{4.4}
\left\|\sqrt{\rho}\frac{u_{\rho,\theta}-u_\theta}{\rho}\right\|^2+\|\sqrt{\rho}u_{\theta,\rho}\|^2&=
\int_{\Omega}\rho\left(\frac{u_{\rho,\theta}-u_\theta}{\rho}+u_{\theta,\rho}\right)^2-
2\int_{\Omega}\rho\frac{u_{\rho,\theta}-u_\theta}{\rho}u_{\theta,\rho}\\ \nonumber
&\leq 4\|\sqrt{\rho}e(\mathbf{u})\|^2+2\|\sqrt{\rho}e(\mathbf{u})\|^2+
2\|\sqrt{\rho}e(\mathbf{u})\|\cdot\left\|\frac{u_\rho}{\sqrt{\rho}}\right\|\\ \nonumber
&=6\|\sqrt{\rho}e(\mathbf{u})\|^2+2\|\sqrt{\rho}e(\mathbf{u})\|\cdot\left\|\frac{u_\rho}{\sqrt{\rho}}\right\|.
\end{align}
It remains to bound $\left\|\frac{u_\rho}{\sqrt{\rho}}\right\|$ in terms of $\|\sqrt{\rho}e(\mathbf{u})\|$ and for that purpose let us prove, that
\begin{equation}
\label{4.4.5}
\left\|\frac{u_\rho}{\sqrt{\rho}}\right\|\leq 3\|\sqrt{\rho}e(\mathbf{u})\|.
\end{equation}
By the periodicity in $\theta,$ we can write the displacement $\mathbf{u}=(u_\rho,u_\theta,u_z)$ in Fourier space in $\theta$ in $W^{1,2}(\Omega).$
It is clear, that if $\mathbf{U}=\sum_{n=0}^\infty (\mathbf{U}_1^n(\rho,z)\cos(n\theta)+\mathbf{U}_2^n(\rho,z)\sin(n\theta)),$
then all inequalities under consideration
separate in the variable $n\in\{0,1,\dots\}$ and thus inequality (\ref{4.4.5}) is sufficient to prove for the case
$\mathbf{U}=\mathbf{U}_1(\rho,z)\cos(n\theta)+\mathbf{U}_2(\rho,z)\sin(n\theta),$ for some fixed $n\in\{0,1,\dots\}.$

Consider 2 cases.\\
\textbf{Case 1.} $n=0.$ In this case there is no $\theta$ dependence, and thus the term $\frac{u_\rho}{\rho}$ is the $22$
element of the symmetrized gradient $e(u),$ which implies
\begin{equation}
\label{4.5}
\left\|\frac{u_\rho}{\sqrt{\rho}}\right\|\leq \|\sqrt{\rho}e(\mathbf{u})\|.
\end{equation}
\textbf{Case 2.} $n\geq1.$ Denote
$$u_\rho=a_\rho(\rho,z)\cos(n\theta)+b_\rho(\rho,z)\sin(n\theta),\qquad u_\theta=a_\theta(\rho,z)\cos(n\theta)+b_\theta(\rho,z)\sin(n\theta).$$
We have on one hand by inequality (\ref{4.4}), that
\begin{equation*}
\left\|\frac{u_{\rho,\theta}-u_\theta}{\sqrt{\rho}}\right\|^2=\pi\int_{r}^R\int_{0}^h
\frac{|nb_{\rho}+a_\theta|^2+|na_\rho+b_\theta|^2}{\rho}dzd\rho\leq 6\|\sqrt{\rho}e(\mathbf{u})\|^2
+2\|\sqrt{\rho}e(\mathbf{u})\|\cdot\left\|\frac{u_\rho}{\sqrt{\rho}}\right\|
\end{equation*}
and on the other hand
\begin{equation*}
\left\|\frac{u_{\theta,\theta}+u_\rho}{\sqrt{\rho}}\right\|^2=\pi\int_{r}^R\int_{0}^h
\frac{|b_{\rho}-na_\theta|^2+|a_\rho-nb_\theta|^2}{\rho}dzd\rho\leq \|\sqrt{\rho}e(\mathbf{u})\|^2,
\end{equation*}
thus multiplying the first inequality by $n^2$ and summing the two inequalities and applying the triangle inequality, we obtain
\begin{align*}
(n^2+1)^2\left\|\frac{u_\rho}{\sqrt{\rho}}\right\|^2&=\pi(n^2+1)^2\int_{r}^R\int_{0}^h\frac{|a_\rho|^2+|b_{\rho}|^2}{\rho}dzd\rho\\
&\leq\frac{\pi}{2}\int_{r}^R\int_{0}^h \frac{|a_\rho-nb_\theta|^2+|n^2a_\rho+nb_\theta|^2+|b_{\rho}-na_\theta|^2+|n^2b_{\rho}+na_\theta|^2}{\rho}dzd\rho\\
&\leq \left(6n^2+1\right)\|\sqrt{\rho}e(\mathbf{u})\|^2+2n^2\|\sqrt{\rho}e(\mathbf{u})\|\cdot\left\|\frac{u_\rho}{\sqrt{\rho}}\right\|,
\end{align*}
which yields
\begin{align}
\label{4.6}
\left\|\frac{u_\rho}{\sqrt{\rho}}\right\|^2&\leq\frac{6}{n^2+1}\|\sqrt{\rho}e(\mathbf{u})\|^2+
\frac{2}{n^2+1}\|\sqrt{\rho}e(\mathbf{u})\|\cdot\left\|\frac{u_\rho}{\sqrt{\rho}}\right\| \\ \nonumber
&\leq 3\|\sqrt{\rho}e(\mathbf{u})\|^2+\|\sqrt{\rho}e(\mathbf{u})\|\cdot\left\|\frac{u_\rho}{\sqrt{\rho}}\right\|,
\end{align}
which then yields the estimate (\ref{4.4.5}).
Therefore, combining Case 1 and Case 2 we obtain
$$
\left\|\frac{u_\rho}{\sqrt{\rho}}\right\|^2\leq 3\|\sqrt{\rho}e(u)\|^2.
$$
In conclusion, estimates (\ref{4.4}) and (\ref{4.4.5}) imply (\ref{4.1}).
%%%%%%%%%%%%%%%%%%%%%%%%%%%%%%%%%%%%%%%%%%%%%%%%%%%%%%%%%%%%%%%%%%%%%%%%%%%%%%%%%%%%%%%

\item{}[\textbf{The $\theta\equiv const$ cross section}]. The cross section $\theta\equiv const$ corresponds to the $2\times 2$ block of the gradient matrix built by the elements $11, 13, 31, 33.$ Let us prove in this case, that
\begin{equation}
\label{4.8}
\|\sqrt{\rho}u_{\rho,z}\|^2+\|\sqrt{\rho}u_{z,\rho}\|^2
\leq C\left(\frac{\|\sqrt{\rho}u_z\|\cdot\|\sqrt{\rho}e(\mathbf{u})\|}{h}+\|\sqrt{\rho}e(\mathbf{u})\|^2\right).
\end{equation}
Note, that inequality (\ref{4.8}) is exactly Theorem~\ref{th:2.1} written for the displacement $\mathbf{U}=(u_z,u_\rho)$ in the
variable $(z,\rho)$ in the rectangular domain $T=(0,h)\times(r,R)$ and then integrated in $\theta\in[0,2\pi].$

\item{}[\textbf{The $\rho\equiv const$ cross section}].
The cross section $\rho\equiv const$ corresponds to the $2\times 2$ block of the gradient matrix built
by the elements $22, 23, 32, 33.$ We aim to prove, that
\begin{equation}
\label{4.9}
\|\sqrt{\rho}u_{z,\theta}\|^2+\left\|\sqrt{\rho}\frac{u_{\theta,z}}{\rho}\right\|^2\leq C\left(\frac{\|\sqrt{\rho}u_z\|\cdot\|\sqrt{\rho}e(\mathbf{u})\|}{h}+\|\sqrt{\rho}e(\mathbf{u})\|^2\right).
\end{equation}
For a fixed $\rho>0$ consider the vector field
$$\mathbf{U}(z,\theta_1)=(f(\rho,\theta_1,z),g(\rho,\theta_1,z))=(u_z(\rho,\theta,z), u_\theta(\rho,\theta,z))$$
defined on the rectangle $(z,\theta_1)\in[0,h]\times[0,2\pi\rho]=T_1,$ where $\theta_1=\rho\theta$. We have that
\begin{equation}
\label{4.10}
\nabla_{z,\theta_1}\mathbf{U}(z,\theta_1)=
\begin{bmatrix}
f_{,z}(\rho,\theta_1,z) & f_{,\theta_1}(\rho,\theta_1,z)\\
g_{,z}(\rho,\theta_1,z) & g_{,\theta_1}(\rho,\theta_1,z)
\end{bmatrix}=
\begin{bmatrix}
u_{z,z}(\rho,\theta,z) & \frac{1}{\rho}u_{z,\theta}(\rho,\theta,z) \\
u_{\theta,z}(\rho,\theta,z) & \frac{1}{\rho}u_{\theta,\theta}(\rho,\theta,z)
\end{bmatrix}
\end{equation}
Note, that both spaces $V_1$ and $V_2$ are compatible with the requirements of Theorem~\ref{3.1}, thus we get
 \begin{equation}
 \label{4.11}
 \|\nabla_{z,\theta_1}\mathbf{U}\|_{L^2(T_1)}^2\leq
 100\left(\frac{\|f\|_{L^2(T_1)}\cdot\|e(\mathbf{U})\|_{L^2(T_1)}}{h}+\|e(\mathbf{U})\|_{L^2(T_1)}^2\right).
 \end{equation}
 Next, denoting $T=[0,h]\times[0,2\pi]$ we get by the triangle inequality
 \begin{align}
 \label{4.12}
 \|&e(\mathbf{U})\|_{L^2(T_1)}^2\\ \nonumber
 &=\int_{T_1}\left(u_{z,z}^2(\rho,\theta,z)+\left(\frac{1}{\rho}u_{\theta,\theta}(\rho,\theta,z)\right)^2+
 \frac{1}{4}\left(u_{\theta,z}(\rho,\theta,z)+\frac{1}{\rho}u_{z,\theta}(\rho,\theta,z)\right)^2\right)dzd\theta_1\\ \nonumber
 &=\rho\int_{T}\left(u_{z,z}^2(\rho,\theta,z)+\left(\frac{1}{\rho}u_{\theta,\theta}(\rho,\theta,z)\right)^2+
 \frac{1}{4}\left(u_{\theta,z}(\rho,\theta,z)+\frac{1}{\rho}u_{z,\theta}(\rho,\theta,z)\right)^2\right)dzd\theta\\ \nonumber
 &\leq\rho\int_{T}\left(u_{z,z}^2+2\left(\frac{1}{\rho}(u_{\theta,\theta}+u_\rho)\right)^2
 +\frac{2u_\rho^2}{\rho^2}+\frac{1}{4}\left(u_{\theta,z}+\frac{1}{\rho}u_{z,\theta}\right)^2\right)dzd\theta\\ \nonumber
 &=\rho S^2(\rho).
  \end{align}
Thus we get from (\ref{4.10})-(\ref{4.12}) the estimate
$$
\int_{T_1}\left[u_{\theta,z}^2(\rho,\theta,z)+\frac{u_{z,\theta}^2}{\rho^2}\right]dzd\theta_1\leq
100\left(\frac{S(\rho)}{h}\left(\rho\int_{T_1}u_z^2(\rho,\theta,z)dzd\theta_1\right)^{\frac{1}{2}}+\rho S^2(\rho)\right),
$$
from where we get
\begin{equation}
\label{4.13}
\int_{T}\left[u_{\theta,z}^2(\rho,\theta,z)+\frac{u_{z,\theta}^2}{\rho^2}\right]dzd\theta\leq 100\left(\frac{S(\rho)}{h}\left(\int_{T}u_z^2(\rho,\theta,z)dzd\theta\right)^{\frac{1}{2}}+S^2(\rho)\right).
\end{equation}
Multiplying the last inequality by $\rho$ and integrating in $\rho$ from $r$ to $R$ and utilizing the Schwartz inequality for the product
$S(\rho)\left(\int_{T}u_z^2(\rho,\theta,z)dzd\theta\right)^{\frac{1}{2}}$ we get
\begin{equation}
\label{4.14}
\|\sqrt{\rho}u_{\theta,z}\|^2+\left\|\frac{u_{z,\theta}}{\sqrt{\rho}}\right\|^2\leq 100\left(\frac{\|\sqrt{\rho}u_z\|}{h}\left(\int_{r}^R\rho S^2(\rho)d\rho\right)^{\frac{1}{2}}+\int_{r}^R\rho S^2(\rho)d\rho\right).
\end{equation}
It is clear from (\ref{4.12}), that
\begin{equation}
\label{4.15}
\int_{r}^R\rho S^2(\rho)d\rho\leq 2\|\sqrt{\rho}e(\mathbf{u})\|^2+2\left\|\frac{u_\rho}{\sqrt{\rho}}\right\|^2.
\end{equation}
Inequalities (\ref{4.15}) and (\ref{4.4.5}) imply
$$\int_{r}^R\rho S^2(\rho)d\rho\leq 8\|\sqrt{\rho}e(\mathbf{u})\|^2,$$
hance (\ref{4.9}) is derived from (\ref{4.14}),
which completes the proof of the last case.
In conclusion, a combination of estimates (\ref{4.1}), (\ref{4.8}) and (\ref{4.9}) completes the proof of Theorem~\ref{th:2.1}.
\end{itemize}
\end{proof}

\begin{proof}[Proof of Theorem~\ref{th:2.2}]
We follow different approaches for the subspaces $V_1$ and $V_2,$ thus consider
the cases $\mathbf{u}\in V_1$ and $\mathbf{u}\in V_2$ separately.\\

\textbf{Case $\mathbf{u}\in V_1.$}
Denote $\mathbf{u}=(v_x,v_y,v_z)$ in Cartesian coordinates. Then it is clear, that $v_z=u_z.$
Observe, that a translation of the $v_z$ component of the displacement $\mathbf{u}$ by a constant
does not affect inequality (\ref{2.3}), thus because there are no boundary conditions imposed on the $u_z$ component of the displacement,
we can translate the component $v_z$ by a suitable constant $\lambda$ to make it satisfy the zero average condition
\begin{equation}
\label{5.16}
\int_{\Omega}v_z(x,y,z)dxdydz=0.
\end{equation}
Due to the zero average condition (\ref{5.16}) we get by the Poincer\'e inequality, that
\begin{equation}
\label{5.17}
\int_{\Omega}|v_z(x,y,z)|^2dxdydz\leq C(R) \int_{\Omega}|\nabla_{x,y,z} v_z(x,y,z)|^2dxdydz,
\end{equation}
in Cartesian coordinates. Next, by the identity
$$|\nabla_{x,y,z} v_z(x,y,z)|^2=\left|\frac{\partial u_z}{\partial \rho}\right|^2+
\frac{1}{\rho^2}\left|\frac{\partial u_z}{\partial \theta}\right|^2+\left|\frac{\partial u_z}{\partial z}\right|^2,$$
we get the equality
\begin{equation}
\label{5.18}
\int_{\Omega}|\nabla_{x,y,z} v_z(x,y,z)|^2dxdydz=\int_{\Omega}\rho|\nabla_{\rho,\theta,z} u_z(\rho,\theta,z)|^2d\rho d\theta dz.
\end{equation}
A combination of (\ref{5.17}) and (\ref{5.18}) gives
\begin{equation}
\label{5.19}
\|\sqrt{\rho}u_z\|^2\leq C(R)\|\sqrt{\rho}\nabla u_z|^2
\end{equation}
in cylindrical coordinates. Inequality (\ref{2.3}) is now a direct consequence of (\ref{2.2}) and (\ref{5.19}).\\

\textbf{Case $\mathbf{u}\in V_2.$} We again consider two subcases here.\\
\textbf{Subcase $2r\geq R. $} This is the easier case and is done in terms of Poincar\'e inequality. Denote $T=[0,2\pi]\times[0,h].$ Due to the
boundary condition $u_z(r,\theta,z)=0,$ we have for any fixed $(\rho,\theta,z)\in\Omega,$ that
$$u_z(\rho,\theta,z)=\int_r^\rho u_{z,t}(t,\theta,z)dt,$$
thus we get by the Schwartz inequality
\begin{align*}
u_z^2(\rho,\theta,z)&=\left(\int_r^\rho u_{z,t}(t,\theta,z)dt\right)^2\\
&\leq \int_{r}^\rho\frac{1}{t}dt\int_r^\rho tu_{z,t}^2(t,\theta,z)dt\\
&\leq \ln{\frac{R}{r}}\int_r^R tu_{z,t}^2(t,\theta,z)dt\\
&\leq \ln2\int_r^R tu_{z,t}^2(t,\theta,z)dt.
\end{align*}
Multiplying the last inequality by $\rho$ and integrating in $\rho$ from $r$ to $R$ we obtain

\begin{align}
\label{5.21}
\int_r^R \rho u_z^2(\rho,\theta,z)d\rho &\leq \frac{\ln2}{2}(R^2-r^2)\int_r^R tu_{z,t}^2(t,\theta,z)dt\\ \nonumber
&\leq R^2\int_r^R tu_{z,t}^2(t,\theta,z)dt.
\end{align}
It remains to integrate inequality (\ref{5.21}) in the variables $(\theta,z)$ over the rectangle $T$ to get
$$\|\sqrt{\rho}u_z\|^2\leq R^2\|\sqrt{\rho}u_{z,\rho}\|^2\leq R^2\|\sqrt{\rho}\nabla\mathbf{u}|^2,$$
as the element $u_{z,\rho}$ is the 31 element of the gradient matrix $\nabla\mathbf{u}.$ The last inequality together with (\ref{2.2})
imply (\ref{2.3}).\\
\textbf{Subcase $2r<R.$} This is the trickier case and is done by the Hardy inequality proven in Lemma~\ref{lem:3.5}. As $R<2\cdot\frac{R+r}{2},$
then by the analogy of the previous subcase if we choose the outer thin boundary points of the washer as the starting points in the integration in
the Poincar\'e inequality, then we get an estimate like (\ref{5.21}) over the interval $\left[\frac{R+r}{2},R\right],$ namely we get for any
$(\theta,z)\in T,$ the following estimate:
\begin{equation}
\label{5.22}
\int_{\frac{R+r}{2}}^R \rho u_z^2(\rho,\theta,z)d\rho\leq  R^2\int_{\frac{R+r}{2}}^R tu_{z,t}^2(t,\theta,z)dt.
\end{equation}
On the other hand applying Lemma~\ref{lem:3.5} to the function $u_z(t,\theta,z)$ in the interval $[r,R]$ we obtain

\begin{equation}
\label{5.23}
\int_r^{\frac{R+r}{2}}\rho u_z^2(\rho,\theta,z)d\rho \leq  4\int_{\frac{R+r}{2}}^R \rho u_z^2(\rho,\theta,z)d\rho+R^2\int_r^R tu_{z,t}^2(t,\theta,z)dt,
\end{equation}
thus combining (\ref{5.22}) and (\ref{5.23}) we get
\begin{equation}
\label{5.24}
\int_r^R\rho u_z^2(\rho,\theta,z)d\rho \leq  5R^2\int_r^R \rho u_{z,\rho}^2(\rho,\theta,z)d\rho.
\end{equation}
In integration of the last inequality in the $(\theta,z)$ variables over the rectangle $T$ yields
\begin{equation}
\label{5.25}
\|\sqrt{\rho}u_z\|^2\leq 5R^2\|\sqrt{\rho}u_{z,\rho}\|^2.
\end{equation}
Finally, the estimate (\ref{2.3}) follows from (\ref{5.25}) and (\ref{2.2}). The proof of the theorem is finished now.

\end{proof}

\begin{proof}[Proof of Theorem~\ref{th:2.3}]
Let us assume that there is no $\theta$ dependence in the the Ansatz, thus we seek it in the form
$$u=(u_\rho,u_\theta,u_z)=(f(\rho,z),0,g(\rho,z)).$$ Plugging this vector field into formula (\ref{2.1}) we obtain
\begin{equation}
\label{4.22}
\nabla u=
\begin{bmatrix}
f_{,\rho} & 0 & f_{,z}\\
0 & \frac{f}{\rho} & 0\\
g_{,\rho} & 0 & g_{,z}
\end{bmatrix}.
\end{equation}
Then following the idea of Grabovsky and Truskinovsky in [\ref{bib:Gra.Tru.}], which is expanding
the Ansatz in the powers of $h$ and $z$ and keeping only up to the first order terms we arrive at the
Kirchiff Ansatz
\begin{equation}
\label{4.23}
f(\rho,z)=-z\varphi'(\rho),\qquad g(\rho,z)=\varphi(\rho),
\end{equation}
where $\varphi$ is a compactly supported smooth function on $[\frac{R+r}{2},R]$ such that
$\int_r^R |\varphi'(\rho)|^2d\rho$ and $\int_r^R|\varphi''(\rho)|^2d\rho$ are both of order one.
Then it is clear that
\begin{equation}
\label{4.24}
\nabla u=
\begin{bmatrix}
-z\varphi''(\rho) & 0 & -\varphi'(\rho)\\
0 & \frac{-z\varphi'(\rho)}{\rho} & 0\\
\varphi'(\rho) & 0 & 0
\end{bmatrix},
\end{equation}
thus we obtain $\|\sqrt{\rho}e(u)\|^2\sim h^3$ and $\|\sqrt{\rho}\nabla u\|^2\sim h,$ therefore inequality (\ref{2.3}) is sharp in terms of the power of $h.$
For inequality (\ref{2.2}) we choose the Ansatz $u=(f,0,g),$ [\ref{bib:Harutyunyan}, Remark~1.5], where
\begin{equation}
\label{4.25}
f(\rho,z)=-\frac{z}{h^\alpha}\varphi'\left(\frac{\rho}{h^\alpha}\right),\qquad g(\rho,z)=\varphi\left(\frac{\rho}{h^\alpha}\right),\quad\alpha\in\left[0,\frac{1}{2}\right],
\end{equation}
and again $\varphi$ is a compactly supported smooth function on $[\frac{R+r}{2},R]$ such that
$\int_r^R |\varphi'(\rho)|^2d\rho$ and $\int_r^R|\varphi''(\rho)|^2d\rho$ are both of order one.
It is clear, that
\begin{equation}
\label{4.26}
\nabla u=
\begin{bmatrix}
-\frac{z}{h^{2\alpha}}\varphi''(\rho) & 0 & -\frac{1}{h^{\alpha}}\varphi'(\rho)\\
0 & \frac{z}{h^{\alpha}\rho}\varphi'(\rho) & 0\\
\frac{1}{h^{\alpha}}\varphi'(\rho) & 0 & 0
\end{bmatrix},
\end{equation}
hence we get $\|\sqrt{\rho}e(u)\|\sim h^{1.5-2\alpha}$, $\|\sqrt{\rho}\nabla u\|^2\sim h^{1-2\alpha}$ and $\|\sqrt{\rho}g\|\sim h^{0.5}.$
Therefore inequality (\ref{2.2}) is sharp in terms of the power of $h.$ Theorem~\ref{th:2.3} is proved now.
\end{proof}

\section*{Acknowledgements.}
The author is grateful to Graeme Milton and the University of Utah for support.

\end{document}